\newcommand{\brho}{{\boldsymbol \rho}}
\newcommand{\bbA}{{\bold A}}
\newcommand{\bbD}{{\bold D}}
\newcommand{\bbS}{{\bold S}}
\newcommand{\bbR}{{\bold R}}
\newcommand{\bbQ}{{\bold Q}}
\newcommand{\bV}{{\boldsymbol V}}
\newcommand{\bc}{{\boldsymbol c}}
\newcommand{\bq}{{\boldsymbol q}}
\newcommand{\br}{{\boldsymbol r}}
\newcommand{\bu}{{\boldsymbol u}}
\newcommand{\bv}{{\boldsymbol v}}
\newcommand{\bw}{{\boldsymbol w}}
\newcommand{\bz}{{\boldsymbol z}}
\newcommand{\diag}{\mathrm{diag}\,}
\newcommand{\const}{\mathrm{const}\,}
\newcommand{\sign}{\mathrm{sign}\,}
\newcommand{\ol}[1]{\overline{#1}}
\newtheorem{Supp}{Proposition}[section]
\newtheorem{Note}{Remark}[section]
\newtheorem{Theorem}{Theorem}[section]
\title{Control of body motion in an ideal fluid using the internal mass and the rotor in the presence of circulation around the body}
\author{E.V. Vetchanin$^1$, A.A. Kilin$^2$}
\date{}
\begin{document}
\maketitle

\vspace{-10mm}

\begin{center}
$^1$ Kalashnikov Izhevsk State Technical University

$^2$ Udmurt State University
\end{center}

\begin{small}
\textbf{Abstract.} In this paper we study the controlled motion of an arbitrary two-dimensional body in an ideal fluid with a moving internal mass and an internal rotor in the presence of constant circulation around the body.  We show that by changing the position of the internal mass and by rotating the rotor, the body can be made to move to a given point, and discuss the influence of nonzero circulation on the motion control. We have found that in the presence of circulation around the body the system cannot be completely stabilized at an arbitrary point of space, but fairly simple controls can be constructed to ensure that the body moves near the given point.
\end{small}

\tableofcontents

\section{Introduction}

The problem of the motion of a rigid body in an ideal fluid is a classical problem of hydrodynamics and has been studied for a long time. Many significant results were obtained within the model of an ideal fluid by Kirchhoff~\cite{Kirchhoff}, Lamb~\cite{Lamb}, Chaplygin~\cite{Chaplygin}, and Steklov~\cite{Steklov}. For a modern qualitative analysis of the motion of a rigid body in an ideal fluid, see~\cite{Bor_Mam_2006, BorKozMam_2007, cite_23}. From a practical point of view, it is interesting to study the problem of controllable motion of a rigid body in a fluid. Viscosity is of significant importance for producing traction force \cite{Childress,VMT_2013}, but some interesting control methods can be explained using ideal fluid theory. For instance, a theoretical proof of the propulsion of a body by moving the internal masses is given in \cite{Kozlov_2001}. The results of \cite{Kozlov_2001} are extended in \cite{Kilin_Ram_Ten, Kozlov_2003}. Another method of moving the body involves changing the gyrostatic momentum using the rotation of internal rotors. The application of rotors for the stabilization of motion of an underwater vehicle is examined in \cite{Woolsey_Leonard_2002}. Issues of the stability of equilibria of a neutral buoyancy underwater vehicle are considered in \cite{Leonard_Marsden, Leonard}. Results of advanced investigations of the control of nonholonomic systems by means of internal rotors are presented in \cite{Bolotin, BKM_Chap_2012, BKM_Chap_2013, Ivanov_2013, Svinin_2012}.

When the body's motion in an ideal fluid is examined, nonzero circulation of the velocity of a fluid around the body is sometimes assumed. Such a problem statement goes back to Zhukovskii and Chaplygin and explains the lift force acting on the wing. The nonzero circulation results in the appearance of gyroscopic forces acting on the body \cite{Bor_Mam_2006}, which change the dynamics of the system significantly. The controlled motion using an internal rotor changing the proper gyrostatic momentum of the system and using a Flettner rotor changing the circulation around the body is considered in \cite{Ram_Ten_Tre}. In \cite{Vetchanin_Kilin}, analysis of the free motion of a hydrodynamically asymmetric body in an ideal fluid in the presence of circulation is performed and the controllability of the motion by changing the position of the center of mass of the system is proved.

An essential part of the investigation of the controlled motion is the proof of the possibility of control. In \cite{BKM_Chap_2012, Kozlov_2001, Murray_Sastry_1993, Vetchanin_Kilin} the Rashevskii--Chow theorem is used to prove controllability \cite{Agrachev_Sachkov_2004}. In the original form this theorem applies only to driftless systems. In cases where the system is subject to drift, a modification of the Rashevskii--Chow theorem is used for the proof of controllability \cite{Bonnard}. In addition to the requirements of the initial theorem, the requirements of the modification involve proving the Poisson stability of drift. This modification was applied, for example, in \cite{Crouch}.

The investigation of controlled motion leads to the operation speed problem. For example, it is of interest to construct time-optimal controls \cite{Leonard_Optimal, Chyba_Leonard_Sontag}. For various aspects of control theory, see \cite{Agrachev_Sachkov_2004, Jurdjevic}.

This paper is an extension of \cite{Vetchanin_Kilin}, but in contrast to the previous paper, we are concerned with a  combined control scheme which uses the motion of the internal mass and rotation of the internal rotor. In Section~2, equations of motion and their first integrals are presented. In Section~3 the controllability is proved. It is shown that although controllability can be achieved by using the rotor alone, it is not constructive since it depends considerably on the drift. Note that in this case nonzero circulation facilitates controllability. Therefore, we have proved the controllability by means of the internal mass and the internal rotor in the following cases: the internal mass moves in a circle (cam), and the internal mass moves in a straight line (slider). These motion patterns have been selected because of ease of technical implementation. In Section~4, we show a negative influence of circulation, which leads to the necessity of applying controls to stabilize the body at the end point of the trajectory. In the previous paper \cite{Vetchanin_Kilin} we have shown that the internal mass needs to be moved uniformly in a straight line to ensure that the body is stabilized at a given point. Since the motion of the internal mass is bounded by the body's boundary, this method of stabilization is not reasonable. In view of the above, it is proposed to ensure, by using controls, zero translational velocity of the body with its angular velocity being nonzero. The necessary calculations are presented in Appendices A and B.

\section{The mathematical model}

Consider the two-dimensional problem of motion in an infinite volume of an ideal incompressible fluid of a hydrodynamically asymmetric body with mass $M$ and central moment of inertia $I$ (see Fig. \ref{frames}).

\begin{figure}[h!]
	\begin{center}
		\includegraphics[width=0.5\linewidth]{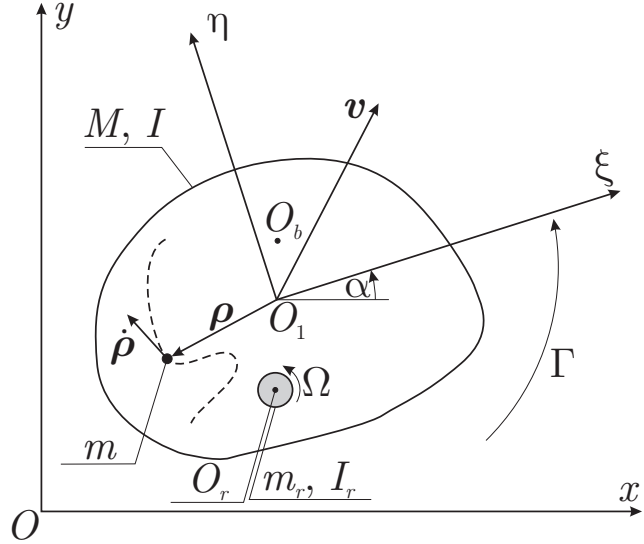}
		\caption{Body with an internal mass and an internal rotor}\label{frames}
	\end{center}
	\vspace{-5mm}
\end{figure}

The body carries a particle of mass $m$ and a rotor with mass $m_r$ and central moment of inertia $I_r$. The motion of the particle is limited by the shell, but the particle follows an arbitrary smooth trajectory $\brho=(\xi(t),\, \eta(t))$. The rotor has the shape of a circular cylinder, is homogeneous, rotates with angular velocity $\Omega(t)$; its axis of rotation is perpendicular to the plane of motion of the body and passes through the center of mass of the rotor. We assume that there is a nonzero and constant (by the Lagrange theorem) circulation $\Gamma$ of the fluid velocity around the body.

%


To describe the motion of the system, we introduce two Cartesian coordinate systems: a fixed one, $Oxy$, and a moving one, $O_1\xi\eta$, attached to the body (see Fig. \ref{frames}). Point $O_1$ coincides with the position of the center of mass of the body--rotor system. The position of the body in absolute space is characterized by the radius vector $\br = (x,\, y)$ and the angle of rotation $\alpha$ of the moving coordinate system relative to the fixed coordinate system. Thus, the configuration space of the system is $\mathcal{H} = \mathbb{R}^2 \times \mathbb{S}^1$,
and the pair $(\br,\, \alpha)$ completely specifies the position and orientation of the body.

Let $\bv = (v_1,\, v_2)$ denote the absolute velocity of point $O_1$ of the body referred to the axes of the moving coordinate system and $\omega$ the angular velocity of the body. Then the following kinematic relations hold:
\begin{gather}
\dot{\bq} = \bbQ \bw, \quad \bbQ = \begin{pmatrix}
\cos \alpha & -\sin \alpha & 0\\
\sin \alpha & \cos \alpha & 0\\
0 & 0 & 1
\end{pmatrix}\label{kinem}
\end{gather}
where $\bq = (x,\, y,\, \alpha)$ is the vector of generalized coordinates and $\bw = (v_1,\, v_2,\, \omega)$ is the vector of quasi-velocities.

We specify the position of the body's center of mass (point $O_b$) by the radius vector $(\xi_b,\, \eta_b)$ and the position of the rotor's center of mass (point $O_r$) by the radius vector $(\xi_r,\, \eta_r)$. In the chosen coordinate system the coordinates of points $O_b$ and $O_r$ are related by
\begin{gather}
M \xi_b + m_r \xi_r =0,\quad M \eta_b + m_r \eta_r =0.\nonumber
\end{gather}
The expressions for the kinetic energy of the body $T_b$, the internal mass $T_m$, the internal rotor $T_r$ and the fluid $T_f$ have the form
\begin{gather}
T_b = \frac{1}{2} M \left( (v_1-\eta_b \omega)^2 + (v_2+\xi_b\omega)^2\right) + \frac{1}{2} I \omega^2, \nonumber\\
T_m = \frac{1}{2} m \left( (v_1 +\dot{\xi} - \eta \omega)^2 + (v_2 + \dot{\eta} + \xi \omega)^2 \right), \nonumber\\
T_r = \frac{1}{2} m_r \left( (v_1 - \eta_r \omega)^2 + (v_2 + \xi_r\omega)^2 \right) + \frac{1}{2} I_r (\omega + \Omega)^2 , \nonumber\\
T_f = \frac{1}{2}\lambda_1 v_1^2 + \frac{1}{2} \lambda_2 v_2^2 + \frac{1}{2} \lambda_6 \omega^2, \nonumber
\end{gather}
where $\lambda_1$ and $\lambda_2$ are the added masses ($\lambda_1 \neq \lambda_2$), $\lambda_6$ is the added moment of inertia, and $\xi(t)$, $\eta(t)$, $\Omega(t)$ are known functions of time which play the role of controls in the system considered. With our choice of the origin of the moving coordinate system, the kinetic energy  of the entire system is defined, up to the known functions of time, by the following expression:
\begin{gather}
T = \frac{1}{2} \left(\bbA\bw,\, \bw \right) + \left(\bu,\, \bw\right),\\
\bbA = \begin{pmatrix}
a_1 & 0 & f\\
0 & a_2 & g\\
f & g & b
\end{pmatrix}, \quad \bu = \begin{pmatrix}
m \dot{\xi}\\
m \dot{\eta}\\
m (\xi \dot{\eta} - \dot{\xi} \eta) + I_r \Omega
\end{pmatrix}, \nonumber\\
a_1 = M + m + m_r + \lambda_1,\quad a_2 = M + m + m_r + \lambda_2, \nonumber \\
b = M (\xi_b^2 + \eta_b^2) + I + m(\xi^2 + \eta^2) + m_r (\xi_r^2 + \eta_r^2) + I_r + \lambda_6, \quad f = -m \eta,\quad g = m \xi.\nonumber
\end{gather}

The equations of motion of the system incorporating forces due to circulation around the body have the form
\begin{gather}
\begin{gathered}
\frac{d}{dt}\left( \frac{\partial T}{\partial v_1} \right) = \omega \frac{\partial T}{\partial v_2} - \lambda v_2 -\zeta \omega,\\
\frac{d}{dt}\left( \frac{\partial T}{\partial v_2} \right) =- \omega \frac{\partial T}{\partial v_1} + \lambda v_1 + \chi \omega,\\
\frac{d}{dt}\left( \frac{\partial T}{\partial \omega} \right) = v_2 \frac{\partial T}{\partial v_1} - v_1 \frac{\partial T}{\partial v_2} +\zeta v_1 - \chi v_2,
\end{gathered}\label{T_eq}
\end{gather}
where $\lambda = \rho \Gamma$, $\zeta = \rho \Gamma \mu$, $\chi = \rho \Gamma \nu$, and $\mu$, $\nu$ are the coefficients associated with the hydrodynamical asymmetry of the body \cite{Chaplygin}.

Equations \eqref{T_eq} can be written in the form of Poincar\'{e} equations on the group $E(2)$
\begin{gather}
\begin{gathered}
\frac{d}{dt}\left( \frac{\partial L}{\partial v_1} \right) = \omega \frac{\partial L}{\partial v_2} + \cos\alpha \frac{\partial L}{\partial x} + \sin\alpha \frac{\partial L}{\partial y},\\
\frac{d}{dt}\left( \frac{\partial L}{\partial v_2} \right) = - \omega \frac{\partial L}{\partial v_1} - \sin\alpha \frac{\partial L}{\partial x} + \cos\alpha \frac{\partial L}{\partial y},\\
\frac{d}{dt}\left( \frac{\partial L}{\partial \omega} \right) = v_2 \frac{\partial L}{\partial v_1} - v_1 \frac{\partial L}{\partial v_2} + \frac{\partial L}{\partial \alpha}
\end{gathered}\label{L_eq}
\end{gather}
with Lagrangian
\begin{gather}
L = \frac{1}{2} \left(\bbA\bw,\, \bw \right) + \left( \bc,\, \bw \right) + \left(\bu,\, \bw\right),\\
\bc = \begin{pmatrix}
- \frac{\lambda}{2}(x\sin\alpha -y \cos \alpha)\\
-\frac{\lambda}{2} (x\cos \alpha + y\sin \alpha)\\
-\chi (x\sin\alpha -y\cos\alpha)-\zeta(x\cos\alpha+y\sin\alpha)
\end{pmatrix}.\nonumber
\end{gather}

Equations \eqref{kinem}, \eqref{L_eq} form a closed system of six equations
\begin{gather}
\begin{gathered}
\frac{d}{d t} (a_1 v_1 + f \omega + m \dot{\xi}) = \omega (a_2 v_2 + g \omega + m \dot{\eta}) - \lambda v_2 - \zeta \omega, \\
\frac{d}{d t} (a_2 v_2 + g \omega + m \dot{\eta}) =  -\omega (a_1 v_1 + f \omega + m \dot{\xi}) + \lambda v_1 + \chi \omega,\\
\begin{split}
\frac{d}{dt} (f v_1 + g v_2 + b \omega + m (\xi \dot{\eta} - \eta \dot{\xi}) + I_r \Omega) = {} & {} \\
= v_2 (a_1 v_1 + f \omega + m \dot{\xi}) - {} & {} v_1 (a_2 v_2 + g \omega + m \dot{\eta}) + \zeta v_1 - \chi v_2,
\end{split}\\
\frac{d x}{dt} =  v_1 \cos \alpha - v_2 \sin \alpha,\quad \frac{d y}{dt} =  v_1 \sin \alpha + v_2 \cos\alpha,\quad \frac{d \alpha}{dt} = \omega
\end{gathered}\label{exp_full}
\end{gather}
in the variables $v_1$, $v_2$, $\omega$, $x$, $y$, $\alpha$ and completely describe the motion of the system considered.

For the case of a freely moving system, i.e., for $\dot{\xi} = \dot{\eta} = 0$, $\Omega = 0$, equations \eqref{exp_full} admit the first integrals \cite{Chaplygin, Bor_Mam_2006}
\begin{gather}
\begin{gathered}
p_x = \left( \frac{\partial L}{\partial v_1} - \chi \right) \cos\alpha - \left( \frac{\partial L}{\partial v_2} - \zeta \right) \sin\alpha + \frac{\lambda}{2}y,\\
p_y = \left( \frac{\partial L}{\partial v_1} - \chi \right) \sin\alpha + \left( \frac{\partial L}{\partial v_2} - \zeta \right) \cos\alpha - \frac{\lambda}{2}x,\\
K = x p_y - y p_x + \frac{\partial L}{\partial \omega} + \frac{\lambda }{2} (x^2 + y^2) - c_3.
\end{gathered}\label{imp_int}
\end{gather}
These integrals are generalized to the case of controlled motion and can be written in explicit form as follows:
\begin{gather}
\begin{gathered}
p_x = (a_1 v_1 + f\omega + u_1 - \chi) \cos\alpha - (a_2 v_2 + g\omega + u_2 - \zeta)\sin\alpha + \lambda y,\\
p_y = (a_1 v_1 + f\omega + u_1 - \chi) \sin\alpha + (a_2 v_2 + g\omega + u_2 - \zeta)\cos\alpha - \lambda x,\\
K = f v_1 + g v_2 + b \omega + u_3 + \frac{\lambda}{2}(x^2+y^2)+x p_y - y p_x.
\end{gathered}\label{exp_int}
\end{gather}
The integrals $p_x$, $p_y$, $K$ have the meaning of the linear and angular momentum components of the body + control elements + fluid system and are a generalization of the integrals for a system with moving internal masses \cite{Kozlov_2001}.

Note that \eqref{exp_full} contains only the derivative $\dot\Omega$ (and not the angular velocity $\Omega$ itself), hence, the rotor rotating with constant angular velocity does not influence the dynamics of the system.

+\section{Controllability}

To prove the controllability of motion on the fixed level set of first integrals, we shall use a modification of the Rashevskii-Chow theorem \cite{Agrachev_Sachkov_2004,Chow_1939,Rashevskii} for systems with drift\footnote{The term {\it drift} is used to mean nonzero motion of the system with control disabled.} \cite{Bonnard}. This theorem requires, in addition to completeness of the linear span of the vector fields and their commutators, that there exists everywhere a dense set of Poisson stable points for the free motion (drift) in the phase space of the system.

The issue of Poisson stability of drift is considered in our previous paper \cite{Vetchanin_Kilin}. In particular, it was shown that on the common level set of the integrals $p_x$ and $p_y$ the velocities $v_1$, $v_2$, and $\omega$ are related to the coordinates $x$ and $y$ by
\begin{gather}
(a_1 v_1 + f \omega - \chi)^2 + (a_2 v_2 + g \omega - \zeta)^2 = (p_x - \lambda y)^2 + (p_y + \lambda x)^2 \label{eq.compactness}
\end{gather}
It is clear from \eqref{eq.compactness} that the free motion is bounded by a circular region whose size and position depend on the level of the kinetic energy of the system $T$, the level sets of the integrals $p_x$ and $p_y$, the body geometry and circulation. In addition, the system of equations \eqref{kinem} and \eqref{L_eq} is integrable; hence, by the Poincar\'{e} recurrence theorem \cite{Arnold}, the free motion of the system is Poisson stable.
Therefore, in what follows we shall investigate only the issue of completeness of the linear span of the vector fields and their commutators.

In \cite{Vetchanin_Kilin} the controllability of motion by means of an internal mass capable of moving arbitrarily inside the body is proved. Therefore, an analogous system to which an internal rotor is added is controllable as well. In this section we prove controllability for two particular cases in which the following restrictions are imposed on possible motions of the internal mass:
\begin{enumerate}
	\itemsep=-2pt
	\item The internal mass is fixed.
	\item The internal mass moves along a given curve.
\end{enumerate}

\subsection{The case of a fixed internal mass} \label{subsec_rot}

Let us examine the controllability of the system's motion only by changing the rotation of the rotor. In this case, $\dot{\xi} = \dot{\eta} = 0$, and the equations of motion \eqref{L_eq} and the first integrals \eqref{exp_int} are
\begin{gather}
\begin{gathered}
\frac{d}{dt}\left( a_1 v_1 + f\omega \right) = \omega (a_2 v_2 + g \omega) - \lambda v_2 -\zeta \omega,\\
\frac{d}{dt}\left( a_2 v_2 + g\omega \right) = - \omega (a_1 v_1 + f \omega) + \lambda v_1 + \chi \omega,\\
\frac{d}{dt}\left( f v_1 + g v_2 + b \omega + I_r \Omega \right) = v_2 (a_1 v_1 + f \omega) - v_1 (a_2 v_2 + g \omega) +\zeta v_1 - \chi v_2
\end{gathered}
\end{gather}
and
\begin{gather}
\begin{gathered}
p_x = (a_1 v_1 + f\omega - \chi) \cos\alpha - (a_2 v_2 + g\omega - \zeta)\sin\alpha + \lambda y,\\
p_y = (a_1 v_1 + f\omega - \chi) \sin\alpha + (a_2 v_2 + g\omega - \zeta)\cos\alpha - \lambda x,\\
K = f v_1 + g v_2 + b \omega + I_r \Omega + \frac{\lambda}{2}(x^2+y^2)+x p_y - y p_x.
\end{gathered}\label{exp_int1}
\end{gather}

From the integrals $p_x$, $p_y$, and $K$ we express the velocities
\begin{gather}
\bw = \bbA^{-1} \begin{pmatrix}
\ol{x} \cos\alpha + \ol{y}\sin\alpha + \chi\\
-\ol{x} \sin\alpha + \ol{y}\cos\alpha + \zeta\\
F - \frac{1}{2\lambda}(\ol{x}^2 + \ol{y}^2) - I_r \Omega
\end{pmatrix},\label{vel_rot}
\end{gather}
where $\ol{x} = p_x - \lambda y$, $\ol{y} = p_y + \lambda x$, $F = \dfrac{2 K \lambda + p_x^2 + p_y^2}{2\lambda}$. Substituting \eqref{vel_rot} into the kinematic relations \eqref{kinem}, we obtain the equations of motion for the body on the fixed level set of the integrals $p_x$, $p_y$, $K$ in a standard form linear in the controls
\begin{gather}
\dot{\bq} = \bV_0(\bq) + \bV_1 \Omega, \label{rot_cont}\\
\bV_0(\bq) = \bbS \left( \ol{x} \cos \alpha + \ol{y} \sin\alpha + \chi,\;  -\ol{x} \sin\alpha + \ol{y} \cos\alpha + \zeta,\; F - \frac{1}{2\lambda} (\ol{x}^2 + \ol{y}^2) \right)^T , \nonumber\\
\bV_1 = \bbS (0,\; 0,\; -I_r)^T,  \quad \bbS = \bbQ \bbA^{-1}.\nonumber
\end{gather}
Here the angular velocity of rotation of the rotor $\Omega$ is considered as control, the vector field $\bV _0$ corresponds to the free motion (drift), and the vector field $\bV _1$ is related to the control action. Consider the vector fields
\begin{gather}
\begin{gathered}
\bV_0,\quad \bV _1, \quad \bV_{2}=\left[ \bV_0,\, {\bV}_1 \right], \quad \bV_{3} =\left[\bV_2,\, {\bV}_1 \right],
\end{gathered} \label{rot_vect}
\end{gather}
where $\left[ \cdot,\, \cdot \right]$ is the Lie bracket. The rank of the linear span of the vector fields $\bV_0,\, \bV_1,\, \bV_3$ is equal to three everywhere except on the surface given by
\begin{gather}
\begin{gathered}
2 (a_2^2 - a_1^2) \left( (\ol{x}^2 - \ol{y}^2) \sin\alpha \cos\alpha - \ol{x}\ol{y}(\cos^2\alpha-\sin^2\alpha) \right) + \\
+ a_1 (2a_1 - a_2) \zeta (\ol{y}\sin\alpha + \ol{x}\cos\alpha) + a_2 (2 a_2 - a_1) \chi (\ol{x} \sin\alpha - \ol{y} \cos\alpha) = 0.
\end{gathered}\label{surf1}
\end{gather}
In a similar way, the rank of the linear span of the vector fields $\bV_1,\, \bV_2,\, \bV_3$ is equal to three everywhere except on the surface given by
\begin{gather}
\begin{gathered}
a_1^2 \zeta^2 + a_2^2 \chi^2 + 2 (a_1 - a_2)^2 (\ol{x}^2 + \ol{y}^2) + \\
+ 3a_1 (a_2 - a_1) \zeta (\ol{x}\sin\alpha - \ol{y}\cos\alpha) + 3 a_2 (a_2 - a_1) \chi (\ol{y} \sin\alpha + \ol{x}\cos\alpha)=0.
\end{gathered}\label{surf2}
\end{gather}

\begin{Note}
	The expressions \eqref{surf1} and \eqref{surf2} hold in the case where the moving coordinate system is chosen such that the matrix $\bbA$ takes the diagonal form $\bbA = \diag (a_1,\, a_2,\, b)$.
\end{Note}

It is easy to show that the surfaces \eqref{surf1} and \eqref{surf2} intersect along the curves
\begin{gather}
\begin{gathered}
\ol{x} = \left(\tau_1 + \tau_2 \right)\sin \alpha + \left(\tau_1 - \tau_2 \right)\cos \alpha,\\
\ol{y} = - \left(\tau_1 + \tau_2 \right)\cos \alpha + \left(\tau_1 - \tau_2 \right)\sin \alpha,
\end{gathered}\label{low_curve}
\end{gather}
where $\tau_1$ and $\,\tau_2$ are the solution of the system of equations
\begin{gather}
\begin{gathered}
\left( 8 \vert a_2 - a_1 \vert \tau_1 + 3 (a_1 \zeta + a_2 \chi) \right)^2 + \left( 8 \vert a_2 - a_1 \vert \tau_2 + 3 (a_1 \zeta - a_2 \chi) \right)^2 - 2 \left( a_1^2 \zeta^2 + a_2^2 \chi^2 \right) = 0,\\
\left(\tau_1 + \frac{a_2 (2a_2 - a_1) \chi + a_1 (2a_1 - a_2) \zeta}{8 (a_2^2 - a_1^2)} \right)^2- \left(\tau_2 - \frac{a_2 (2a_2 - a_1) \chi - a_1 (2a_1 - a_2) \zeta}{8 (a_2^2 - a_1^2)} \right)^2 - \\
\\ - \frac{4a_1 a_2 (2a_2 - a_1) (2a_1 - a_2) \chi \zeta}{16 (a_2^2 - a_1^2)} = 0.
\end{gathered}\label{rot_sing}
\end{gather}
Thus, in the configuration space $\mathcal{H}$ the dimension of the linear span of the vector fields \eqref{rot_vect} is equal to three everywhere except along the curves \eqref{low_curve}. Since the above curves are the surfaces of codimension two, the following theorem holds.
\begin{Theorem}\label{theo.rotor}
	An arbitrary body moving in a fluid (in the presence of circulation around the body) with a given initial velocity can be moved by an appropriate rotation of the internal rotor from any initial position to any end position.
\end{Theorem}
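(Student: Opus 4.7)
My plan is to invoke the modification of the Rashevskii--Chow theorem for systems with drift cited above from Bonnard. Its hypotheses split into two items: (i) Poisson stability of the free motion on an everywhere dense subset of phase space, and (ii) the Lie algebra rank condition, meaning that at every configuration point the span of $\bV_0$, $\bV_1$ and their iterated Lie brackets has dimension equal to that of $\mathcal{H} = \mathbb{R}^2 \times \mathbb{S}^1$, namely three.

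Hypothesis (i) is already at our disposal. The integral relation \eqref{eq.compactness} confines the velocities on any fixed level of $p_x$, $p_y$ to a bounded region of the $(v_1, v_2, \omega)$-space, and the free system \eqref{kinem}, \eqref{L_eq} with $\dot\xi=\dot\eta=\Omega=0$ is integrable on $\mathcal{H}$. The Poincar\'e recurrence theorem then yields a dense set of Poisson stable points for the drift, as already indicated above and established in \cite{Vetchanin_Kilin}, so the task reduces to checking (ii).

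For (ii), I would form the brackets $\bV_2 = [\bV_0, \bV_1]$ and $\bV_3 = [\bV_2, \bV_1]$ from \eqref{rot_vect} and invoke the rank statements already computed: $\rank\{\bV_0, \bV_1, \bV_3\}=3$ off the surface \eqref{surf1}, and $\rank\{\bV_1, \bV_2, \bV_3\}=3$ off the surface \eqref{surf2}. Hence the LARC fails at most on the common exceptional locus, which is the one-parameter family of curves \eqref{low_curve}, of codimension two in $\mathcal{H}$. Combined with (i), this is exactly what the drift version of Rashevskii--Chow requires to conclude reachability between any two configurations.

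The principal obstacle is making the codimension-two step genuinely rigorous: one must translate ``LARC fails only on a codimension-two subset'' into ``controllability holds on all of $\mathcal{H}$.'' Since a codimension-two subset of a three-dimensional manifold neither separates it nor is dense anywhere, generic trajectories stay away from \eqref{low_curve}; however, configurations that happen to lie on these curves still require treatment, for instance by prepending a short pulse of the control $\Omega$ that perturbs the state into the open stratum before invoking the theorem there. I expect this perturbation argument, together with the Poisson recurrence of the drift, to close the proof.
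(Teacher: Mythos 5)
Your proposal follows essentially the same route as the paper: Poisson stability of the drift via the boundedness relation \eqref{eq.compactness}, integrability and Poincar\'e recurrence, combined with the rank computations for $\{\bV_0,\bV_1,\bV_3\}$ and $\{\bV_1,\bV_2,\bV_3\}$ off the surfaces \eqref{surf1}, \eqref{surf2}, whose intersection \eqref{low_curve} has codimension two, and then Bonnard's drift version of Rashevskii--Chow. Your closing remark about perturbing off the exceptional curves is a small refinement of a step the paper simply asserts, but it does not change the argument.
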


We note that the controllability proved in Theorem \ref{theo.rotor} is formal. Indeed, to construct control, the Rashevskii--Chow theorem uses the motion along the vector fields in both forward and backward time. When there is a drift, the motion along it is possible only in forward time. The motion in backward time is implemented by using the recurrence property of the trajectories. However, the calculation of the recurrence period in specific systems can be a fairly complicated problem (which can even defy a solution, for example, in the case of chaotic systems with measure), and the period itself can be very large or even tend to infinity. Thus, an appropriate construction of such controls is impossible.

For the chosen control method the presence of circulation is a necessary condition for controllability. Indeed, it is easy to show that for $\Gamma = 0$ the system \eqref{rot_cont} becomes uncontrollable in the sense of Rashevskii--Chow. On the other hand, it follows from \eqref{rot_cont} that the drift caused by circulation cannot be completely compensated for by rotating the rotor. Therefore, in what follows we consider a combined model of controlling by both the rotor and the moving internal mass.

\subsection{The case of motion of the internal mass along a given curve}

We shall assume that the internal mass can move only along some curve $\brho = (\xi(s), \eta(s))$, where $s$ is a parameter of the curve. From the integrals \eqref{exp_int} we express the velocities
\begin{gather}
\bw = \bbA^{-1} \begin{pmatrix}
\overline{x} \cos \alpha + \overline{y} \sin \alpha + \chi - m \frac{d \xi}{d s} \dot{s}\\
-\overline{x} \sin \alpha + \overline{y} \cos\alpha + \zeta - m \frac{d \eta}{d s} \dot{s}\\
F - \frac{\overline{x}^2+\overline{y}^2}{2\lambda} - m (\xi \frac{d \eta}{d s} - \eta \frac{d \xi}{ds}) \dot{s} - I_r \Omega
\end{pmatrix}. \label{veloc}
\end{gather}
Substituting \eqref{veloc} into the kinematic relations \eqref{kinem}, we obtain the equations of motion for the body on the fixed level set of the integrals \eqref{exp_int}. These equations depend on both the position $s$ of the mass on the curve and its velocity $\dot{s}$. Applying the standard method for phase space extension (Goh transformation \cite{Bonnard_Chyba}), we obtain the equations of motion in the form linear in the controls
\begin{gather}
\dot{\bz} = \bV_0(\bz)+\bV_1(s) \dot{s} + \bV_2 \Omega, \label{kinem_1}\\
\bV_0(\bz) = \bbR \left( \overline{x} \cos \alpha + \overline{y} \sin \alpha + \chi,\, -\overline{x} \sin \alpha + \overline{y} \cos\alpha + \zeta,\, F - \frac{\overline{x}^2+\overline{y}^2}{2\lambda},\, 0 \right)^T,\nonumber\\
\bV_1(s) = \bbR \left( -m \frac{d \xi}{ds},\, -m \frac{d \eta}{ds},\, m \left( \eta \frac{d\xi}{ds} - \xi \frac{d \eta}{ds} \right),\, 1 \right)^T,\quad \bV_2 = \bbR \left(0,\, 0,\, -I_r,\, 0 \right)^T,\nonumber\\
\bbR = \begin{pmatrix}
\bbQ \bbA ^{-1} & 0 \\
0 & 1
\end{pmatrix}. \nonumber
\end{gather}
Here $\bz = (x,\, y,\, \alpha,\, s)^T$ is the vector in the expanded phase space $\mathcal{G}$. Note that not the coordinates of the internal mass, but its velocity of motion along the curve $\dot{s}$ and the angular velocity of the rotor $\Omega$  are taken as controls. The vector field $\bV _0$ corresponds to the drift, and the vector fields $\bV _1$, $\bV _2$ are associated with the control actions. We consider the vector fields
\begin{gather}
\begin{gathered}
\bV_1, \quad \bV_2, \quad \bV _3 = [\bV _1,\, \bV _2], \quad \bV _4 = [\bV _1,\, \bV _3], \quad \bV _5 = [\bV _2,\, \bV _3],\quad \bV _6 = [\bV _1,\, \bV_4]
\end{gathered} \label{full_vect}
\end{gather}
and show the completeness of their linear span. Here we deliberately do not consider the field $\bV_0$, since in the case of completeness of the linear span of the vector fields \eqref{full_vect} we also prove the controllability of motion for the case of zero circulation.

Consider separately two cases where the internal mass can move either along a straight line or along a circle. The choice of these curves is motivated by their simplicity.

1. We parameterize the reciprocating motion of the internal mass along the straight line as follows:
\begin{gather}
\xi = k_1 \sin s,\quad \eta = k_2 \sin s, \label{fb_law}
\end{gather}
where $k_1$, $k_2$ are constants that do not vanish simultaneously. In this case, the vector fields $\bV_1$, $\bV_2$, $\bV_3$, $\bV_5$ are dependent for
\begin{gather}
s = \pi / 2~\mbox{and}~s = 3\pi / 2. \label{criteria1_full}
\end{gather}
On the surfaces given by \eqref{criteria1_full}, the condition of linear dependence of the vector fields $\bV_1$, $\bV_2$, $\bV_4$, $\bV_6$ is
\begin{gather}
A \sin\alpha + B\cos\alpha = 0,\label{criteria2_full}
\end{gather}
where $A$ and $B$ are the coefficients of rather complicated form which depend only on the system parameters. Thus, the rank of the linear span of the vector fields \eqref{full_vect} is four everywhere in the phase space $\mathcal{G}$ except on the surface of codimension two, defined by \eqref{criteria1_full} and \eqref{criteria2_full}. Hence, the following theorem holds.
\begin{Theorem}\label{t2}
	An arbitrary body moving in a fluid (regardless of the presence of circulation around the body) with a given initial velocity can be moved by means of reciprocating motions of the internal mass and by rotating the internal rotor from any initial position to any final position with any initial and final positions of the internal mass.
\end{Theorem}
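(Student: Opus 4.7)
The strategy is to verify the hypotheses of the drift-version of the Rashevskii--Chow theorem for the extended system \eqref{kinem_1} on the phase space $\mathcal{G} = \mathbb{R}^2 \times \mathbb{S}^1 \times \mathbb{R}$, with controls $\dot{s}$ and $\Omega$. The Poisson stability of the drift is essentially already in hand: on a level set of $p_x, p_y, K$ the compactness bound \eqref{eq.compactness} confines the drift trajectories to a bounded region, so the Poincar\'{e} recurrence theorem applies verbatim, just as in the argument preceding Theorem~\ref{theo.rotor}. Consequently the entire task reduces to proving that the Lie algebra generated by the control vector fields $\bV_1, \bV_2$ spans $T\mathcal{G}$ on a dense open subset.

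First, I would substitute the parameterization \eqref{fb_law} into the expressions for $\bV_1(s)$ and $\bV_2$ in \eqref{kinem_1}. The key point is that the $s$-dependence enters $\bV_1$ only through $d\xi/ds = k_1 \cos s$, $d\eta/ds = k_2 \cos s$ and through the product $\xi\,d\eta/ds - \eta\,d\xi/ds$, which vanishes identically for this parameterization since $\xi$ and $\eta$ are proportional. Next, I would compute the iterated brackets $\bV_3 = [\bV_1,\bV_2]$, $\bV_4 = [\bV_1,\bV_3]$, $\bV_5 = [\bV_2,\bV_3]$, $\bV_6 = [\bV_1,\bV_4]$ using $\bbR = \diag(\bbQ\bbA^{-1},1)$ and the fact that $\bV_2$ is $s$-independent while $\bV_1$ depends on $s$ only through $\cos s$. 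The bracket $\bV_3$ contributes a vector involving $\sin s$ (from $d/ds$ applied to $\cos s$), which explains the appearance of the loci $s = \pi/2, 3\pi/2$ in \eqref{criteria1_full}.

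The central computation is a $4\times 4$ determinant test. I would first check that $\det(\bV_1,\bV_2,\bV_3,\bV_5)$, viewed as a function on $\mathcal{G}$, factors as $\cos s$ times a nonvanishing expression in the system parameters; this identifies \eqref{criteria1_full} as the singular locus for this choice of spanning set. On the degenerate surfaces $\{\cos s = 0\}$ the brackets $\bV_4$ and $\bV_6$ become the next candidates; here $d\xi/ds, d\eta/ds$ vanish, so $\bV_1$ simplifies to $\bbR(0,0,0,1)^T$, and one must expand $\bV_4, \bV_6$ carefully to see that $\det(\bV_1,\bV_2,\bV_4,\bV_6)$ takes the form $A\sin\alpha + B\cos\alpha$ as in \eqref{criteria2_full}, where $A$ and $B$ are polynomials in the fixed data $a_1, a_2, b, m, k_1, k_2, I_r, \chi, \zeta$ but depend on neither the state variables nor $s$. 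Combining the two tests, the bracket-generating condition fails only on the intersection $\{\cos s = 0\}\cap\{A\sin\alpha + B\cos\alpha = 0\}$, a codimension-two subset of $\mathcal{G}$, hence the span is full on an everywhere dense open set. Together with the Poisson stability already established, the drift-version of Rashevskii--Chow yields the claimed controllability between arbitrary $(\br,\alpha,s)$ configurations.

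The main obstacle I anticipate is purely algebraic: verifying that the coefficients $A$ and $B$ in \eqref{criteria2_full} are not simultaneously zero identically in the parameters $(k_1,k_2,a_1,a_2,\dots)$. This requires expanding the fourfold bracket $\bV_6 = [\bV_1,[\bV_1,[\bV_1,\bV_2]]]$ on the locus $\cos s = 0$ and tracking the contribution proportional to $m\,k_i$ terms through the multiplication by $\bbA^{-1}$. Provided the hydrodynamic asymmetry $a_1 \neq a_2$ holds and $(k_1,k_2)\neq(0,0)$ as assumed, one expects at least one of $A,B$ to be a nonzero polynomial in the generic parameters, giving a proper codimension-one locus inside $\{\cos s = 0\}$ and thus the desired codimension-two exceptional set. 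Note also that since the argument uses only $\bV_1,\dots,\bV_6$ and not $\bV_0$, the conclusion holds regardless of whether the circulation $\Gamma$ vanishes, as stated in the theorem.
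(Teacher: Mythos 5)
Your proposal follows the paper's own proof essentially verbatim: the same extended system \eqref{kinem_1} with controls $\dot{s}$ and $\Omega$, the same bracket family \eqref{full_vect} deliberately excluding the drift field $\bV_0$ (which is exactly what makes the conclusion independent of circulation), the same identification of the singular loci \eqref{criteria1_full} and \eqref{criteria2_full}, and the same codimension-two conclusion feeding into the modified Rashevskii--Chow theorem with the Poisson stability of the drift already established before Theorem \ref{theo.rotor}. There are no substantive differences in approach.
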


2. We parameterize the motion of the internal mass in a circle as follows:
\begin{gather}
\xi = r_o \cos s,\quad \eta = r_o \sin s, \quad r_o > 0.
\end{gather}
It is easy to show that in this case the vector fields $\bV_1$, $\bV_2$, $\bV_3$, $\bV_5$ are independent in the entire space $\mathcal{G}$. Hence, the following theorem holds.
\begin{Theorem}\label{t3}
	An arbitrary body moving in a fluid (regardless of the presence of circulation around the body) with a given initial velocity can be moved by means of the motion of the internal mass in a circle and by rotating the internal rotor from any initial position to any final position with any initial and final positions of the internal mass.
\end{Theorem}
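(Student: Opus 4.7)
The plan is to apply the driftless form of the Rashevskii--Chow theorem: since Theorem~\ref{t3} is stated in terms of the four fields $\bV_1,\bV_2,\bV_3,\bV_5$ from \eqref{full_vect}, it suffices to show that these span $T_z\mathcal{G}$ at every $z=(x,y,\alpha,s)\in\mathcal{G}$; no Poisson-stability argument for $\bV_0$ is required. First I would insert the circular parameterization $\xi=r_o\cos s$, $\eta=r_o\sin s$ into the formulas \eqref{veloc}--\eqref{kinem_1}. Two useful simplifications appear: $\xi^2+\eta^2\equiv r_o^2$ makes $b$ independent of $s$, so only the off-diagonal entries $f=-mr_o\sin s$, $g=mr_o\cos s$ of $\bbA$ carry $s$-dependence; and $\eta\,\xi'-\xi\,\eta'\equiv -r_o^2$ makes the third component of the vector generating $\bV_1$ a nonzero constant. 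Concretely, with $\wt{\bV}_1(s)=(mr_o\sin s,\,-mr_o\cos s,\,-mr_o^2,\,1)^T$ and $\wt{\bV}_2=(0,0,-I_r,0)^T$, one has $\bV_i=\bbR(\alpha,s)\wt{\bV}_i$.

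The key structural observation is that the top three components of every field in the hierarchy factor as $\bV_i^{\mathrm{top}}=\bbQ(\alpha)U_i(s)$, where $U_i$ depends only on $s$; moreover the $\alpha$- and $s$-components of each $\bV_i$ also depend only on $s$ (because the third row of $\bbQ$ is $(0,0,1)$ and $\bbR$ is block-diagonal). Using $\partial_\alpha\bbQ=J\bbQ$ with $J$ the generator of planar rotations, a direct calculation shows that this class of fields is closed under the Lie bracket, with
\[
U_{[V,W]}(s)=J\bigl(V^\alpha U_W-W^\alpha U_V\bigr)+V^s U_W'-W^s U_V'.
\]
Since $\bV_1^s=1$ and $\bV_2^s=0$ are constants, the fourth component of any Lie bracket built from these two fields vanishes. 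Consequently the fourth row of the $4\times 4$ matrix $[\bV_1\mid\bV_2\mid\bV_3\mid\bV_5]$ is $(1,0,0,0)$, and its determinant reduces, up to sign, to the $3\times 3$ determinant $\det[U_2(s)\mid U_3(s)\mid U_5(s)]$, which depends only on $s$ (the orthogonal factor $\det\bbQ=1$ drops out).

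What remains is to compute $U_3$ (from $[\bV_1,\bV_2]$) and $U_5$ (from $[\bV_2,\bV_3]$) using the bracket formula above, form the scalar $\Delta(s)=\det[U_2(s)\mid U_3(s)\mid U_5(s)]$, and verify that $\Delta(s)\neq 0$ for every $s\in\mathbb{S}^1$. This produces a trigonometric polynomial in $s$ whose coefficients are rational functions of $a_1,a_2,b,m,r_o,I_r$. The main obstacle --- and the reason the circular parameterization is expected to be easier than the rectilinear one \eqref{fb_law} --- is isolating a geometric factor that is bounded away from zero. In the rectilinear case the tangent vector $(\xi',\eta')=(k_1,k_2)\cos s$ degenerates at $s=\pi/2,3\pi/2$, producing the singular loci \eqref{criteria1_full}; in the circular case $(\xi')^2+(\eta')^2\equiv r_o^2>0$ for all $s$, so one expects $\Delta(s)$ to factor as a nonvanishing prefactor of order $r_o^2 I_r^2$ times a strictly positive combination of the added-mass parameters, precluding any cancellation. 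Completing the proof is then a routine, if lengthy, symbolic verification.
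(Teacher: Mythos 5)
Your overall strategy is the same as the paper's: drop the drift field $\bV_0$ (justified because the controls $\dot s$, $\Omega$ are unbounded, so full rank of $\mathrm{Lie}\{\bV_1,\bV_2\}$ alone gives controllability without any Poisson-stability argument), and show that $\bV_1,\bV_2,\bV_3,\bV_5$ from \eqref{full_vect} span $T_z\mathcal{G}$ everywhere. Your structural reduction is correct and is in fact a useful sharpening of the paper's unexplained ``it is easy to show'': the fields do have the form $\bigl(\bbQ(\alpha)U(s),\,V^s(s)\bigr)$, the bracket formula you state follows from $\partial_\alpha\bbQ=J\bbQ$ with $J$ commuting with $\bbQ$, the $s$-components of all brackets of $\bV_1$ and $\bV_2$ vanish because $\bV_1^s\equiv 1$ and $\bV_2^s\equiv 0$, and the $4\times4$ determinant therefore collapses to a scalar $\Delta(s)=\det[U_2\,|\,U_3\,|\,U_5]$ depending on $s$ alone.

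The gap is that you never establish the one fact the theorem actually rests on: $\Delta(s)\neq 0$ for all $s$. The heuristic you offer --- that $(\xi')^2+(\eta')^2\equiv r_o^2>0$ whereas the tangent vector degenerates at $s=\pi/2,\,3\pi/2$ in the rectilinear case \eqref{fb_law} --- only explains why $\bV_1$ itself never reduces to the pure $\partial_s$ direction; it says nothing about possible cancellations in the $3\times 3$ determinant built from $U_2$, $U_3=U_{[\bV_1,\bV_2]}$ and $U_5=U_{[\bV_2,\bV_3]}$, whose entries involve the $s$-dependent inverse of $\bbA$ (note $f=-mr_o\sin s$, $g=mr_o\cos s$, so $\bbA^{-1}$, and hence every $U_i$, genuinely varies with $s$ even though $b$ does not). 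Your expectation that $\Delta$ ``factors as a nonvanishing prefactor times a strictly positive combination of the added-mass parameters'' is precisely the assertion to be proved, not evidence for it. Moreover this is not a removable technicality: a zero of $\Delta$ at some $s_*$ would make the rank drop on the hypersurface $\{s=s_*\}$, which has codimension one in $\mathcal{G}$ (unlike the codimension-two loci tolerated in the other cases), and would directly threaten the clause ``with any initial and final positions of the internal mass'' in Theorem~\ref{t3}. So the computation of $U_3$ and $U_5$ and the verification that the resulting trigonometric expression has no real zeros must actually be carried out (symbolically or by exhibiting an explicit positive lower bound); until then the proof is incomplete at exactly the point where the paper's own one-line claim carries the content.
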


\section{Stabilization of the body at a given point}

\subsection{Equations for controls}

Consider the stabilization of the body at a given point. Without loss of generality we assume that the system has started its motion from the origin of coordinates $x(0) = y(0) = 0$ with the initial orientation $\alpha(0) = 0$, the rotor and the movable mass being at rest. In this case the motion of the system occurs on the level set of the integrals
\begin{gather}
p_x = -\chi,\quad p_y = -\zeta,\quad K = 0,\quad F = \frac{\chi^2 + \zeta^2}{2 \lambda}.
\end{gather}

Suppose that over some interval the body moved under the control action and  came at time $t = T$ to the point $(x,\, y)$ with orientation $\alpha$. Let us formulate the problem of the body stabilization at this point as follows:

\textit{Can one choose limited control actions $\xi(t)$, $\eta(t)$ and $\Omega(t)$ at $t \geqslant T$ such that the body will stay arbitrarily long at point $(x,\, y)$ (possibly rotating about a fixed point).}

If a stabilization occurs without rotation ($\dot{\alpha} = 0$), we shall call it a complete stabilization, while a stabilization occurs with rotation about a fixed point ($\dot{\alpha} \neq 0$) will be called a partial stabilization. We impose the condition of limitation of control actions taking into account the possibility of their technical realization. In particular, the position of the moving mass is restricted by the boundary of the body, and the velocity of its motion and the rotor's rotation are restricted by the capabilities of the motors.

Since the controlled motion is described by the system of differential equations \eqref{kinem_1} in the expanded space, this problem may be viewed as a particular case of the problem of controlling \textit{a part} of variables. Therefore, the solution of this problem reduces not to the solution of some system of algebraic equations, but to analysis of differential equations governing the evolution of control actions $\xi(t)$, $\eta(t)$, $\Omega(t)$ and the remaining "free" variable $\alpha(t)$.

To define these equations, we substitute the equalities $\dot{x} = \dot{y} = 0$ into the first three equations of the system \eqref{kinem_1}. As a result, we obtain the system of differential equations in $\xi(t)$, $\eta(t)$, $\alpha(t)$ which contains an unknown function $\Omega(t)$
\begin{gather}
\bbD \frac{d}{dt} \begin{pmatrix}
\xi \\ \eta \\ \alpha
\end{pmatrix} = \begin{pmatrix}
\overline{x} \cos \alpha + \overline{y} \sin \alpha + \chi\\
-\overline{x} \sin \alpha + \overline{y} \cos\alpha + \zeta\\
\frac{\chi^2 + \zeta^2}{2\lambda} - \frac{\overline{x}^2+\overline{y}^2}{2\lambda}
\end{pmatrix} - \begin{pmatrix}
0 \\ 0 \\ I_r\Omega
\end{pmatrix}, \quad \bbD = \begin{pmatrix}
m  & 0 & -m\eta\\
0 & m & m\xi\\
-m\eta & m\xi & b
\end{pmatrix}.\label{stop}
\end{gather}
We recall that $\ol{x} = p_x - \lambda y$, $\ol{y} = p_y + \lambda x$. Making the change of variables
\begin{gather}
\xi = \rho \cos \left( \varphi - \ol{\varphi} \right), \quad \eta = \rho \sin \left( \varphi - \ol{\varphi} \right), \quad \psi = \alpha + \varphi - \ol{\varphi} + \ol{\psi}, \nonumber\\
\ol{\psi} = \arctan \frac{\ol{x}}{\ol{y}},\quad \ol{\varphi} = \arctan \frac{\chi}{\zeta},\nonumber
\end{gather}
we can represent equations \eqref{stop} as
\begin{gather}
\begin{gathered}
m\rho\dot{\psi} = r \cos \psi + \sigma \cos \varphi,\\
m \dot{\rho} = r \sin \psi + \sigma \sin \varphi,\\
(m\rho^2 + \ol{b}) \dot{\psi} - \ol{b} \dot{\varphi} = C_{xy} - I_r \Omega,
\end{gathered}\label{polar_eq}
\end{gather}
where we have used the notation
\begin{gather}
r = \sqrt{\ol{x}^2+\ol{y}^2},\quad \sigma = \sqrt{\chi^2 + \zeta^2},\quad C_{xy} = \frac{\sigma^2 - r^2}{2\lambda},\nonumber\\
\ol{b} = b - m \rho^2 = M (\xi_b^2 + \eta_b^2) + I + m_r (\xi_r^2 + \eta_r^2) + I_r + \lambda_6. \nonumber
\end{gather}
The system \eqref{polar_eq} contains three equations and four unknown functions $\rho$, $\psi$, $\varphi$, $\Omega$ and cannot be uniquely solved without using additional conditions. In what follows we consider several particular variations of the problem in which an answer to the question raised can be found.

\subsection{Complete stabilization of the body}

In the case of complete stabilization of the body we assume that the conditions $\dot{x} = \dot{y} = 0$, $\dot{\alpha}=0$ are satisfied at $t > T$. In this case, the following proposition holds.
\begin{Supp}\label{Supp.FullStop}
	A complete stabilization of the body during an infinite interval of time is possible if and only if the center of the body (point $O_1$ in Fig. \ref{frames}) is on the circle
	\begin{gather}
	x = \frac{\zeta}{\lambda} + \frac{\sqrt{\chi^2 + \zeta^2}}{\lambda} \cos \theta,\quad y = -\frac{\chi}{\lambda} + \frac{\sqrt{\chi^2 + \zeta^2}}{\lambda} \sin \theta,\quad \theta \in [0,\, 2\pi)\label{fixed}
	\end{gather}
	and its orientation is given by
	\begin{gather}
	\alpha = \theta - \arctan \frac{\zeta}{\chi} - \frac{\pi}{2}.\label{connection}
	\end{gather}
	The corresponding controls are equal to zero, i.e., the moving mass and the rotor are at rest.
\end{Supp}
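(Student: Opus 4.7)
The plan is to reduce the problem to an algebraic system by using the hypothesis that complete stabilization persists for an arbitrarily long time, and then to recover the circle \eqref{fixed} and the orientation relation \eqref{connection} by solving that system explicitly.

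First I would translate the stabilization conditions $\dot x = \dot y = 0$ and $\dot\alpha = 0$ into conditions on the quasi-velocities: since the matrix $\bbQ$ in \eqref{kinem} is invertible, these force $v_1 = v_2 = 0$ and $\omega = 0$ throughout the stabilization interval $t\geq T$. Thus the dynamics reduce exactly to the system \eqref{stop} restricted by the additional equation $\dot\alpha = 0$. With the fixed initial values $p_x = -\chi$, $p_y = -\zeta$, $K=0$ one has $\bar x = -\chi - \lambda y$ and $\bar y = -\zeta + \lambda x$, which are constants along the stabilization interval since $x,y$ are constants.

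Next I would exploit the boundedness of the controls, which is the only nontrivial step. Setting $\dot\alpha = 0$ in the first two rows of \eqref{stop} yields
\begin{gather*}
m\dot\xi = \bar x \cos\alpha + \bar y \sin\alpha + \chi, \qquad m\dot\eta = -\bar x\sin\alpha + \bar y\cos\alpha + \zeta,
\end{gather*}
whose right-hand sides are constants. Hence $\dot\xi$ and $\dot\eta$ are constant along $[T,\infty)$. Since the internal mass must stay inside the body's shell, $\xi(t)$ and $\eta(t)$ must remain bounded on the infinite interval, forcing $\dot\xi\equiv\dot\eta\equiv 0$. This is precisely the place where the hypothesis of \emph{infinite} stabilization is used; it is also the only step that requires a qualitative (rather than purely algebraic) argument, and I regard it as the main obstacle to keep honest.

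The two vanishing conditions then become the algebraic system
\begin{gather*}
\bar x \cos\alpha + \bar y \sin\alpha = -\chi, \qquad -\bar x\sin\alpha + \bar y\cos\alpha = -\zeta,
\end{gather*}
which is a rotation and inverts to $\bar x = -\chi\cos\alpha + \zeta\sin\alpha$, $\bar y = -\chi\sin\alpha - \zeta\cos\alpha$. Squaring and summing gives $r^2 = \bar x^2 + \bar y^2 = \chi^2 + \zeta^2 = \sigma^2$, so $C_{xy} = 0$; then the third equation of \eqref{stop} (with $\dot\xi=\dot\eta=\dot\alpha=0$) gives $I_r\Omega = 0$, i.e.\ $\Omega\equiv 0$. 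Substituting $\bar x = -\chi - \lambda y$ and $\bar y = -\zeta + \lambda x$ into the inverted system and introducing $\chi = \sigma\cos\bar\varphi$, $\zeta = \sigma\sin\bar\varphi$ with $\bar\varphi = \arctan(\zeta/\chi)$, I would use the identities $\chi\sin\alpha + \zeta\cos\alpha = \sigma\sin(\alpha+\bar\varphi)$ and $\chi\cos\alpha - \zeta\sin\alpha = \sigma\cos(\alpha+\bar\varphi)$ to obtain the parametrization \eqref{fixed} with $\theta = \pi/2 + \alpha + \bar\varphi$, which rearranges to \eqref{connection}.

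For sufficiency I would simply verify that on any point of the circle \eqref{fixed} with orientation \eqref{connection}, setting $\dot\xi = \dot\eta = 0$ and $\Omega = 0$ satisfies \eqref{stop} identically, so that the configuration $(x,y,\alpha)$ is a genuine equilibrium of the controlled system for all $t\geq T$. The converse direction carried out above already shows that no other complete-stabilization configurations exist, completing the equivalence.
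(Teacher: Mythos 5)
Your proposal is correct and follows essentially the same route as the paper's own proof: setting $\dot\alpha=0$ in \eqref{stop} to get constant right-hand sides for $m\dot\xi$, $m\dot\eta$, invoking the boundedness of the internal mass to force those right-hand sides to vanish, solving the resulting algebraic (rotation) system for the circle \eqref{fixed} and orientation \eqref{connection}, checking $r=\sigma$ so the third equation gives $\Omega=0$, and verifying sufficiency by noting that with zero controls the integrals force $\bw=0$ at these configurations. Your write-up merely spells out the rotation inversion and the $\theta=\pi/2+\alpha+\ol{\varphi}$ substitution more explicitly than the paper does.
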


\begin{proof} \textbf{Necessity.} Setting $\dot{\alpha} = 0$ in \eqref{stop}, we obtain the equations for the control actions
	\begin{gather}
	\begin{gathered}
	m \dot{\xi} = \ol{x} \cos \alpha + \ol{y} \sin \alpha + \chi,\\
	m \dot{\eta} = - \ol{x} \sin \alpha + \ol{y} \cos \alpha + \zeta,\\
	I_r \Omega +  m (\xi \dot{\eta} - \dot{\xi} \eta) = \frac{\chi^2 + \zeta^2}{2\lambda} - \frac{\ol{x}^2 + \ol{y}^2}{2\lambda}.
	\end{gathered}\label{full_stop}
	\end{gather}
	Since the right-hand sides of the first two equations \eqref{full_stop} are constant and in the general case are not equal to zero, $\xi$ and $\eta$ are linear functions of time. Hence, at a certain instant of time the internal mass will reach the boundary of the body and will have to stop. Thus, a stabilization during an infinite interval of time is possible only if the right-hand sides of the first two equations of \eqref{full_stop} are equal to zero. Setting the right-hand sides of \eqref{full_stop} to zero and expressing $x$ and $y$ from them, we obtain the equations for the circle \eqref{fixed} and the relation of the body's orientation to a point on the circle \eqref{connection}.
	
	By substituting the resulting coordinates of the points of stabilization into the third equation of \eqref{full_stop}, it is easy to check that all controls are equal to zero in this case.
	
	\textbf{Sufficiency.} Substituting \eqref{fixed} and \eqref{connection} into the integrals \eqref{exp_int}, we obtain a system of equations of the form
	\begin{gather}
	\bbA \bw + \bu = 0.
	\end{gather}
	The nondegeneracy of the matrix $\bbA$ implies that when there are no controls ($\bu=0$) and the body is at the points \eqref{fixed} with orientation \eqref{connection}, the body is at rest ($\bw = 0$).
\end{proof}

\begin{Note}
	For a complete stabilization of the body on the circle \eqref{fixed} the position of the internal mass is inessential.
\end{Note}

Since a complete stabilization at an arbitrary point is possible only during a finite interval of time, we consider the partial stabilization, i.e., $\dot{x} = \dot{y} = 0$, $\dot{\alpha} \neq 0$, by using \eqref{polar_eq}. In particular, we consider separately two cases in which additional restrictions (see Section 3) are imposed on the motion of the internal mass:
\begin{enumerate}
	\itemsep=-2pt
	\item partial stabilization by rotating the rotor and by moving the internal mass in a circle ($\rho = \const$).
	\item partial stabilization by rotating the rotor and by moving the internal mass along a straight line fixed in the body ($\varphi~=~\const$).
\end{enumerate}

\subsection{Partial stabilization at $\rho=\const$}

Consider the partial stabilization in the case where the internal mass can move only in the circle of a given radius. For $\rho = \ol{\rho} = \const$, equations \eqref{polar_eq} take the form
\begin{gather}
\begin{gathered}
m \ol{\rho} \dot{\psi} = r \cos \psi + \sigma \cos \varphi,\\
r \sin \psi + \sigma \sin \varphi = 0,\\
(m\ol{\rho}^2 + \ol{b}) \dot{\psi} - \ol{b} \dot{\varphi} = C_{xy} - I_r \Omega.
\end{gathered}\label{rho123}
\end{gather}
The first two equations of \eqref{rho123} form a closed system whose solution can yield the functions $\varphi(t)$ and $\psi(t)$. Then, using the known functions $\varphi(t)$ and $\psi(t)$ from the third equation of \eqref{rho123}, we can express the dependence $\Omega(t)$
\begin{gather}
\Omega = \frac{1}{I_r} \left( C_{xy} - \frac{1}{m\ol{\rho}} (r \cos \psi + \sigma \cos \varphi)   \left( m \ol{\rho}^2 + \ol{b} + \ol{b} \frac{r \cos \psi}{\sigma \cos \varphi} \right) \right).\label{omega_pp}
\end{gather}

\begin{Note}
	At the initial instant of time the functions $\varphi$ and $\psi$ must satisfy the second equation of \eqref{rho123}. This condition can be fulfilled by choosing the initial position of the internal mass. The possibility of choosing this initial position is ensured by Theorem \ref{t3}, from which it follows that we can bring the body to the end point with an arbitrary position of the moving mass.
\end{Note}

The solution of the system of equations \eqref{rho123} depends on the relationship between the parameters $r$ and $\sigma$. By a direct substitution it is easy to show that the condition $r = \sigma$ corresponds to the points of the circle \eqref{fixed}. This circle divides the plane $(x,\, y)$ into two regions (see Fig. \ref{no_dreif}), in which the solution of the system \eqref{rho123} is different. Moreover, in what follows we shall prove that the following proposition holds.

\begin{figure}[h!]
	\begin{center}
		\includegraphics[width=0.4\linewidth]{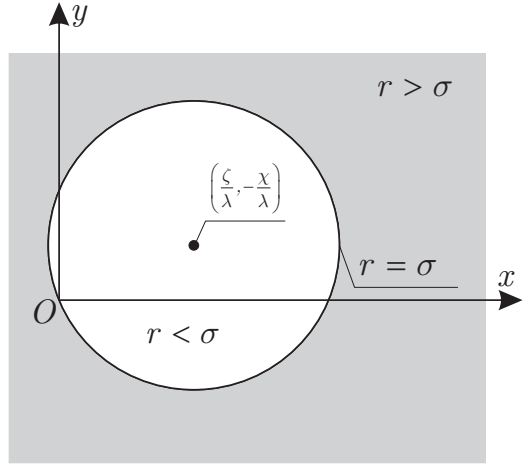}
		\caption{Regions of various solutions}\label{no_dreif}
		\vspace{-5mm}
	\end{center}
\end{figure}

\begin{Supp}\label{Supp.rhoConst}
	A partial stabilization during an infinite interval of time by rotating the rotor and by moving the internal mass in a circle is possible only inside and on the circle $r = \sigma$ (see Fig. \ref{no_dreif})
\end{Supp}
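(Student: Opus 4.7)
My plan is to prove the contrapositive: assuming the body can be stabilized at a point with $r>\sigma$ via the system \eqref{rho123}, I will derive a contradiction with the boundedness of the controls. The key observation is that during stabilization the coordinates $(x,y)$ are frozen, so $\ol{x}=p_x-\lambda y$ and $\ol{y}=p_y+\lambda x$ are constants and therefore so are $r=\sqrt{\ol x^2+\ol y^2}$ and $\sigma=\sqrt{\chi^2+\zeta^2}$. The second equation of \eqref{rho123} then becomes the \emph{algebraic} constraint
\begin{gather*}
\sigma\sin\varphi=-r\sin\psi,
\end{gather*}
which for $r>\sigma$ forces $|\sin\psi|\le\sigma/r<1$, restricting $\psi$ to bands of width $2\arcsin(\sigma/r)$.

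First, I would analyze the geometry of this constraint curve on the torus $(\psi,\varphi)\in\mathbb{T}^2$: when $r>\sigma$ it decomposes into two disjoint closed components, distinguished by $\mathrm{sign}(\cos\psi)$, each of which projects to a bounded interval in $\psi$. Next, I would show that the first equation of \eqref{rho123}, rewritten as $m\ol\rho\dot\psi=r\cos\psi+\sigma\cos\varphi$, has a definite sign on each component. Indeed, $\dot\psi=0$ together with the constraint would give $re^{i\psi}+\sigma e^{i\varphi}=0$, forcing $r=\sigma$ and contradicting our assumption; a continuity/connectedness argument then pins the sign down (positive on the $\cos\psi>0$ component, negative on the other). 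Hence $\psi(t)$ is strictly monotone on each component, and since $\psi$ is confined to a bounded interval and $|\dot\psi|$ is bounded below by a positive constant on any compact subset, $\psi$ must reach the extremal value $\psi_{*}=\pm\arcsin(\sigma/r)$ in finite time.

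The final step is the obstruction. Differentiating the algebraic constraint along a trajectory yields
\begin{gather*}
r\cos\psi\,\dot\psi+\sigma\cos\varphi\,\dot\varphi=0.
\end{gather*}
At the extremal point $\psi_{*}$ one has $\sin\varphi=\mp 1$, hence $\cos\varphi=0$, while $\cos\psi=\sqrt{1-\sigma^2/r^2}\neq 0$ and $\dot\psi$ is bounded away from zero. Therefore $|\dot\varphi|\to\infty$ as the trajectory approaches this point. Since $\varphi$ parameterizes the position of the internal mass on its circular track, $\dot\varphi$ is precisely the angular velocity that must be supplied by the motor, and its divergence violates the standing assumption that the controls are bounded. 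Hence no infinite-time partial stabilization exists at points with $r>\sigma$, leaving only the disk $r\le\sigma$ as admissible, which is exactly the region inside and on the circle \eqref{fixed}.

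The main obstacle I anticipate is the definite-sign argument for $\dot\psi$ on each component: one must rule out that the two branch choices for $\cos\varphi=\pm\sqrt{1-(r/\sigma)^2\sin^2\psi}$ could be combined to avoid the singular points where $\cos\varphi=0$. The clean algebraic observation $|re^{i\psi}|\ne|\sigma e^{i\varphi}|$ for $r\ne\sigma$ seems to handle this, but some care is needed to show that a trajectory cannot switch branches except at the very singular points it is trying to avoid.
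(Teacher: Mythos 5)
Your argument for the region $r>\sigma$ is correct and arrives at the same obstruction as the paper by a more elementary route. The paper's Appendix A handles $r>\sigma$ by eliminating $\psi$ from the constraint, integrating the resulting ODE \eqref{eq_Phi1} for $\varphi$ explicitly in elliptic integrals, and reading off from the phase portrait that $\varphi$ reaches $\pm\pi/2$ in finite time with $\Omega$ in \eqref{omega_pp} diverging there; you reach the same singular point ($\cos\varphi=0$, attained in finite time because $\dot\psi$ is sign-definite and bounded away from zero on a compact component of the constraint curve) without any quadratures, and you exhibit the divergence of $\dot\varphi$ (the angular speed of the mass on its track) instead of $\Omega$ --- either unbounded control suffices, and in fact both blow up at that point. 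Your sign argument via $r e^{i\psi}+\sigma e^{i\varphi}=0\Rightarrow r=\sigma$ is clean, and the branch-switching worry in your last paragraph is moot for $r>\sigma$, since the two components of the constraint curve are disjoint closed curves distinguished by $\sign\cos\psi$. What your contrapositive does not deliver, and what occupies most of the paper's proof, is the positive half: for $r<\sigma$ the paper solves the first two equations of \eqref{rho123} in terms of the elliptic integral of the second kind and verifies that $\psi$ is monotone while $\varphi$ and $\Omega$ stay periodic and bounded, and for $r=\sigma$ it shows either a genuine fixed point (recovering \eqref{fixed}) or an infinite-time approach to $\psi=\pi/2$ with finite $\Omega$. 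Read literally, ``possible only inside and on the circle'' is a necessity claim and your proof covers it; but the boundary $r=\sigma$ belongs to the admissible region and your sign-definiteness argument degenerates exactly there, so if the proposition is understood (as the paper and the rest of Section 4 use it) as characterizing where infinite-time stabilization can actually be achieved, the constructive cases $r<\sigma$ and $r=\sigma$ still have to be supplied as in Appendix A.
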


The proof of Proposition \ref{Supp.rhoConst} is given in Appendix А.

\subsection{Partial stabilization for $\varphi = \const$}

Consider a partial stabilization in the case where the internal mass can move only along a straight line fixed in the body. The controllability in this case was proved in Theorem \ref{t2}. In the first two equations of \eqref{polar_eq} we set $\varphi = \varphi _0 = \const$. Then they take the form
\begin{gather}
\begin{gathered}
m \rho \dot{\psi} = r \cos \psi + \sigma \cos \varphi _0, \\
m \dot{\rho} = r \sin \psi + \sigma \sin \varphi _0,
\end{gathered}\label{phi_eq1}
\end{gather}
Solving the system \eqref{phi_eq1}, we can obtain the functions $\varphi(t)$, $\rho(t)$. Then, using the known functions $\varphi(t)$ and $\rho(t)$ from the third equation of \eqref{polar_eq}, we can express the velocity of rotation of the rotor
\begin{gather}
\Omega(t) = \frac{1}{I_r} \left( C_{xy} - \frac{m \rho^2 + \ol{b} }{m\rho} (r \cos \psi + \sigma \cos \varphi _0) \right)\label{phi_eq3}.
\end{gather}

Analysis of equations \eqref{phi_eq1} and \eqref{phi_eq3} shows that the following proposition holds.

\begin{Supp}\label{Supp.phiConst}
	A partial stabilization during an infinite interval of time by rotating the rotor and by means of reciprocating motions of the internal mass in a circle is possible only inside and on the circle $r = \sigma$ (see Fig. \ref{no_dreif})
\end{Supp}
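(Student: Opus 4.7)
My plan is to identify on the two-dimensional subsystem \eqref{phi_eq1} a scalar quantity whose time derivative is bounded below by a strictly positive constant whenever $r > \sigma$, and then to use this estimate to force $\rho$ (the distance of the internal mass from the body-frame origin) to grow without bound. That will contradict the physical requirement that the mass stay within the bounded interior of the body, so $r > \sigma$ will be ruled out, leaving exactly the closed disk bounded by the circle $r = \sigma$ as the locus where infinite-time stabilization can possibly occur.

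First I would note that once $\dot{x} = \dot{y} = 0$ and $\varphi \equiv \varphi_0$ are imposed, the parameters $r$, $\sigma$, $\varphi_0$ are all constant (since $\ol{x}, \ol{y}$ depend only on $x, y$), so $(\rho, \psi)$ evolves autonomously under \eqref{phi_eq1}. Next I would differentiate the combination $\rho \sin\psi$ using those two equations:
\begin{equation*}
m \frac{d}{dt}(\rho \sin\psi) = m\dot\rho\,\sin\psi + m\rho\dot\psi\,\cos\psi = (r\sin\psi + \sigma\sin\varphi_0)\sin\psi + (r\cos\psi + \sigma\cos\varphi_0)\cos\psi = r + \sigma \cos(\psi - \varphi_0),
\end{equation*}
where the Pythagorean identity together with the cosine addition formula has collapsed the right-hand side to $r + \sigma\cos(\psi - \varphi_0) \geq r - \sigma$. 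If $r > \sigma$ this lower bound is strictly positive, so integration over $[T, t]$ yields $\rho(t)\sin\psi(t) \geq \rho(T)\sin\psi(T) + (r - \sigma)(t - T)/m \to +\infty$; since $|\rho \sin\psi| \leq \rho$, this forces $\rho(t) \to \infty$, which is impossible. Therefore $r \leq \sigma$ is necessary, and substituting $\ol{x} = -\chi - \lambda y$, $\ol{y} = -\zeta + \lambda x$ (from $p_x = -\chi$, $p_y = -\zeta$) this inequality is equivalent to $(x - \zeta/\lambda)^2 + (y + \chi/\lambda)^2 \leq \sigma^2/\lambda^2$, i.e., the body's center lies inside or on the circle \eqref{fixed}.

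The only real obstacle is spotting the right quasi-conserved combination $\rho\sin\psi$ (the imaginary part of the complex coordinate $\rho e^{i\psi}$); once it is tried, the calculation collapses in one line and the linear-in-time escape of $\rho$ is immediate. I would not attempt in this proof to establish the converse — that every point of the closed disk $r \leq \sigma$ actually admits an admissible bounded control $(\xi(t), \eta(t), \Omega(t))$ — since the statement of the proposition asks only for necessity. Sufficiency can in any case be read off from \eqref{phi_eq1}--\eqref{phi_eq3} by treating them as a well-posed initial-value problem whose forward orbit is confined to a compact invariant set when $r \leq \sigma$, with $\Omega(t)$ determined pointwise by \eqref{phi_eq3}.
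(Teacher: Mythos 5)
Your necessity argument is correct and takes a genuinely different, and considerably more economical, route than the paper. The paper (Appendix B) constructs the explicit first integral $G=\rho e^{-\mathcal{F}(\psi)}$ of \eqref{phi_eq1} and then works through an exhaustive case analysis of the phase portraits ($\sin\varphi_0=0$ versus $\sin\varphi_0\neq 0$, the sign of $r-\sigma\vert\cos\varphi_0\vert$, the exponent $\delta$, asymptotes, families of fixed points, attainability in finite versus infinite time). You instead observe that
\begin{equation*}
m\,\frac{d}{dt}\bigl(\rho\sin\psi\bigr)=r+\sigma\cos(\psi-\varphi_0)\geqslant r-\sigma,
\end{equation*}
so for $r>\sigma$ the bounded quantity $\rho\sin\psi$ grows at least linearly and $\vert\rho\vert\to\infty$, contradicting confinement of the mass to the body. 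The computation checks out, the constancy of $r$, $\sigma$, $\varphi_0$ under $\dot x=\dot y=0$ is correctly noted, and the identification of $r\leqslant\sigma$ with the closed disk bounded by the circle \eqref{fixed} is right. As a proof that infinite-time stabilization forces $r\leqslant\sigma$ — which is what the proposition literally asserts — this is complete and replaces several pages of case analysis with one line.

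The gap is in your closing remark on sufficiency. It is \emph{not} true that for $r\leqslant\sigma$ the forward orbit of \eqref{phi_eq1} is confined to a compact invariant set with $\Omega$ bounded: the paper's own case 2.1 shows that for $\sin\varphi_0\neq 0$ and $r<\sigma\vert\cos\varphi_0\vert$ the integral forces $\rho$ to grow or decay exponentially on average (increment $\Delta\rho_N=\rho_0(\exp(2N\pi\varkappa\tan\varphi_0/\sqrt{\varkappa^2-1})-1)$ per period), so that either $\rho\to\infty$ or, via the $1/\rho$ factor in \eqref{phi_eq3}, $\Omega\to\infty$; and on the boundary $r=\sigma$ stabilization survives for all time only on special initial data (the families of fixed points). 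Boundedness of the controls inside the disk holds only for suitable choices of the slider direction $\varphi_0$ (e.g.\ $\sin\varphi_0=0$, case 1.1 of the paper). Since the companion Proposition \ref{Supp.rhoConst} is proved in the paper by actually exhibiting bounded periodic controls throughout the disk, the intended content of Proposition \ref{Supp.phiConst} includes this achievability, and your argument as written does not supply it; you would need to add the observation that $\varphi_0$ is at the designer's disposal and that $\varphi_0=0$ yields bounded $\rho$, $\psi$, $\Omega$ for every $r<\sigma$.
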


The proof of Proposition \ref{Supp.phiConst} is given in Appendix B.

\section{Conclusion}

The investigation has shown that the motion of a hydrodynamically asymmetric body in an ideal fluid in the presence of circulation around the body is completely controllable (in the sense of the Rashevskii-Chow theorem) by changing the position of the center of mass (the motion of the internal mass is a motion in a circle or a reciprocating motion) and the angular momentum of the system. Moreover, an arbitrary motion of the system can be performed only by means of an appropriate rotation of the internal rotor.

We have also considered  the possibility of compensation of drift by means of control. In particular, it was shown that by means of circular or reciprocating motion of the internal mass and by rotating the rotor the drift can be compensated for during an infinite interval of time if the body is inside some circular region. The center and the radius of this region are defined by the body geometry and by the amount of circulation of the velocity around the body. Outside this region, the drift can be compensated for using the above-mentioned patterns of motion of the internal mass only during a finite interval of time.

We list a number of open problems that must be solved to design real devices:
\vspace{-2mm}
\begin{enumerate}
	\itemsep=-2pt
	\item Construction of sufficiently simple and feasible patterns of motion of the internal mass to ensure a complete stabilization of the body at an arbitrary point of space during an infinite interval of time.
	\item Construction of explicit control to ensure the motion from one point of space to another.
	\item Motion control by variable circulation. For the case where circulation is a piecewise constant function of time, this problem has been solved in \cite{Ram_Ten_Tre}. In this case, preservation of first integrals of motion was used on the intervals of constant circulation. Of great interest is a more general problem, namely, that of constructing controls by changing circulation according to a smooth law, when the system admits no first integrals of motion.
\end{enumerate}

The authors thank A.V.Borisov and I.S.Mamaev for fruitful discussions.

The work of E.V. Vetchanin was supported by the RFBR grant 15-08-09093-a. The work of A.A. Kilin was supported by the RFBR grant 14-01-00395-a.

\section{Appendix A. Proof of Proposition 4.2}

\begin{proof} We break up the proof into three stages.
	
	1. For $r < \sigma$ the second equation of \eqref{rho123} has two nonintersecting solutions
	\begin{gather}
	\varphi = \begin{cases}
	- \arcsin \left( \dfrac{r}{\sigma} \sin \psi \right) \in \left[-\arcsin \dfrac{r}{\sigma},\, \arcsin \dfrac{r}{\sigma} \right] \subset \left[ - \dfrac{\pi}{2},\, \dfrac{\pi}{2} \right]\\
	\pi + \arcsin \left( \dfrac{r}{\sigma} \sin \psi \right) \in \left[\pi -\arcsin \dfrac{r}{\sigma},\, \pi + \arcsin \dfrac{r}{\sigma} \right] \subset \left[ \dfrac{\pi}{2},\, \dfrac{3\pi}{2} \right]
	\end{cases} \label{Phi_sol}
	\end{gather}
	where $\psi \in [-\pi,\,\pi]$. The realization of a specific branch of the solution \eqref{Phi_sol} depends on the initial value of $\varphi$, which is determined by the position of the internal mass at the instant of arrival at the point $(x,\, y)$. Moreover, according to Theorem \ref{t3} of complete controllability proved above,
	one can ensure, using appropriate controls, the realization of the required branch of the solution \eqref{Phi_sol} at the initial instant of time.
	
	Consider the branch $\varphi \in \left[-\arcsin \dfrac{r}{\sigma},\, \arcsin \dfrac{r}{\sigma} \right]$. Then $\cos \varphi > 0$, and the first equation of \eqref{rho123}, using the second equation, takes the form
	\begin{gather}
	m \ol{\rho} \dot{\psi} = r \cos \psi + \sigma \sqrt{1 - \dfrac{r^2}{\sigma^2} \sin ^2 \psi}.\label{Psi_eq}
	\end{gather}
	Equation \eqref{Psi_eq} has the following solution:
	\begin{gather}
	E \left( \psi,\, \frac{r}{\sigma} \right) - E \left( \psi _0,\, \frac{r}{\sigma} \right) - \frac{r}{\sigma} (\sin \psi - \sin \psi_0) = \frac{\sigma^2 - r^2}{m \ol{\rho} \sigma} t,
	\end{gather}
	where $E \left( \psi,\, \dfrac{r}{\sigma} \right)$ is the normal elliptic Legendre integral of the second kind. The right-hand side of \eqref{Psi_eq} is positive for any value of the angle $\psi$, hence, the function $\psi(t)$ increases monotonically. For the parameter values $\chi = 0.1$, $\zeta = 0.2$, $\lambda = 1.0$, $I_r = 1.0$, $m = 1.0$, $\ol{\rho} = 1.0$, $\ol{b} = 2.0$, $x = 0.1$, $y = 0.0$, $\varphi \in \left[-\arcsin \dfrac{r}{\sigma},\, \arcsin \dfrac{r}{\sigma} \right]$ the form of the functions $\psi$, $\varphi$, $\Omega$ is shown in Fig. \ref{internal_contol}.
	
	\begin{figure}[h!]
		\begin{center}
			\includegraphics[width=0.5\linewidth]{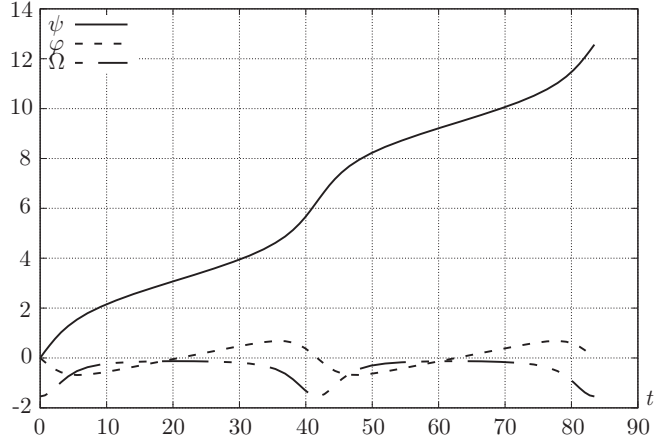}
			\caption{Form of the functions $\psi$, $\varphi$, $\Omega$ for the parameter values $\chi = 0.1$, $\zeta = 0.2$, $\lambda = 1.0$, $I_r = 1.0$, $m = 1.0$, $\ol{\rho} = 1.0$, $\ol{b} = 2.0$, $x = 0.1$, $y = 0.0$}\label{internal_contol}
			\vspace{-5mm}
		\end{center}
	\end{figure}
	The constructed control is periodic, restricted and ensures a partial stabilization during an arbitrarily long interval of time.
	
	Consider the second branch of the solution \eqref{Phi_sol} $\varphi \in \left[\pi -\arcsin \dfrac{r}{\sigma},\, \pi + \arcsin \dfrac{r}{\sigma} \right]$. The differential equation for the determination of $\psi$ has the form
	\begin{gather}
	m \ol{\rho} \dot{\psi} = r \cos \psi - \sigma \sqrt{1 - \dfrac{r^2}{\sigma^2} \sin ^2 \psi}.
	\end{gather}
	Its solution is expressed, just as for the first branch, in terms of the normal elliptic Legendre integral of the second kind:
	\begin{gather}
	E \left( \psi,\, \frac{r}{\sigma} \right) - E \left( \psi _0,\, \frac{r}{\sigma} \right) + \frac{r}{\sigma} (\sin \psi - \sin \psi_0) = - \frac{\sigma^2 - r^2}{m \ol{\rho} \sigma} t.
	\end{gather}
	A straightforward calculation shows that the controls corresponding to the second branch are restricted during an arbitrarily long interval of time also.
	
	2. For $r = \sigma$ the second equation of \eqref{rho123} has two solutions
	\begin{gather}
	\varphi = \begin{cases}
	\psi + \pi.\\
	- \psi
	\end{cases}
	\end{gather}
	
	If the equality $\varphi = \psi + \pi$ holds at the instant of arrival at a given point of space, then the first equation of \eqref{rho123} takes the form
	\begin{gather}
	m \ol{\rho} \dot{\psi} = 0
	\end{gather}
	i.e., the point $\varphi = \psi + \pi$ is a fixed point of the system. By straightforward calculations one can readily verify that this case corresponds to the solution \eqref{fixed} with $\alpha = \ol{\varphi} - \ol{\psi} - \pi$.
	
	If the equality $\varphi = -\psi$ holds at the instant of arrival at a given point of space, then the first equation of \eqref{rho123} takes the form
	\begin{gather}
	m \ol{\rho} \dot{\psi} = 2 r \cos \psi \label{eq111}
	\end{gather}
	
	Equation \eqref{eq111} has two steady-state solutions: a stable one, $\psi _1 = \dfrac{\pi}{2}$, and an unstable one, $\psi _2 = \dfrac{3\pi}{2}$. The general solution of this equation has the form
	\begin{gather}
	\frac{m \ol{\rho}}{4 r} \ln \left\vert \frac{1+\sin \psi}{1 - \sin \psi} \right\vert = t + C
	\end{gather}
	where $C$ is the constant of integration. It is clear from the form of the general solution that the approach to the point $\psi _1$ occurs in infinite time.
	
	The rotational velocity of the rotor can be calculated from the third equation of \eqref{rho123} and takes the form
	\begin{gather}
	\Omega = - \frac{m \ol{\rho}^2 + 2 \ol{b}}{m \ol{\rho} I_r} 2 r \cos \psi,
	\end{gather}
	whence it is clear that the value of $\Omega$ is finite. Thus, for $r = \sigma$ a stabilization is possible in infinite time.
	
	3. For $r > \sigma$ it is more convenient to express $\psi$ from the second equation of \eqref{rho123} as follows:
	\begin{gather}
	\psi = \begin{cases}
	- \arcsin \left( \dfrac{\sigma}{r} \sin \varphi \right) \in \left[-\arcsin \dfrac{\sigma}{r},\, \arcsin \dfrac{\sigma}{r} \right] \subset \left[ - \dfrac{\pi}{2},\, \dfrac{\pi}{2} \right]\\
	\pi + \arcsin \left( \dfrac{\sigma}{r} \sin \varphi \right) \in \left[\pi -\arcsin \dfrac{\sigma}{r},\, \pi + \arcsin \dfrac{\sigma}{r} \right] \subset \left[ \dfrac{\pi}{2},\, \dfrac{3\pi}{2} \right]
	\end{cases}\label{Psi_sol}
	\end{gather}
	where $\varphi \in [-\pi,\,\pi]$. The realization of a specific branch of the solution \eqref{Psi_sol} depends on the initial value of $\psi$, which is defined by the position of the internal mass and by the orientation of the body at the instant of arrival at the point $(x,\, y)$. Moreover, according to Theorem \ref{t3} of complete controllability proved above, using a suitable control one can ensure the realization of the required branch of the solution \eqref{Psi_sol} at the initial instant of time. Consider the branch $\psi \in \left[-\arcsin \dfrac{\sigma}{r},\, \arcsin \dfrac{\sigma}{r} \right]$ of the solution \eqref{Psi_sol}. This branch corresponds to the inequality $\cos\psi > 0$, and the differential equation for the determination of $\varphi$ is obtained from the second equation of \eqref{rho123} and has the form
	\begin{gather}
	\left( \frac{\sigma^2 \cos ^2 \varphi}{\sqrt{r^2 - \sigma^2 \sin^2 \varphi}} - \sigma \cos \varphi \right) \dot{\varphi} = \frac{r^2 - \sigma^2}{m \ol{\rho}}.\label{eq_Phi1}
	\end{gather}
	Let us examine the phase trajectories of \eqref{eq_Phi1}. To do so, we express $\dot{\varphi}$ as follows:
	\begin{gather}
	\dot{\varphi} = \frac{r^2 - \sigma^2}{2 \ol{\rho}} \cdot \frac{\sqrt{r^2 - \sigma^2 \sin ^2 \varphi}}{\sigma \cos \varphi (\sigma \cos \varphi - \sqrt{r^2 - \sigma^2 \sin ^2 \varphi})}\label{phi_phase}
	\end{gather}
	In the case at hand, $\sigma \cos \varphi - \sqrt{r^2 - \sigma^2 \sin ^2 \varphi} < 0$ always holds. Hence, $\dot{\varphi}(\varphi)$ undergoes a discontinuity of the second kind at the points $\varphi = \pm \dfrac{\pi}{2}$:
	\begin{gather}
	\lim _{\varphi \rightarrow \pm \frac{\pi}{2} \mp 0} \dot{\varphi} = -\infty,\quad \lim _{\varphi \rightarrow \mp \frac{\pi}{2} \pm 0} \dot{\varphi} = +\infty.
	\end{gather}
	Note that these singularities do not depend on the value of $\dfrac{r}{\sigma}$. We also note that the function \eqref{phi_phase} does not vanish, and hence the system \eqref{phi_phase} has no fixed points.
	
	The phase trajectories of \eqref{eq_Phi1} for $\dfrac{r}{\sigma} = 1.1$ and various values of $\beta~=~\dfrac{r^2 - \sigma^2}{m \ol{\rho} \sigma}$ are shown in Fig. \ref{phase_traj}
	\begin{center}
		\includegraphics[width=0.5\linewidth]{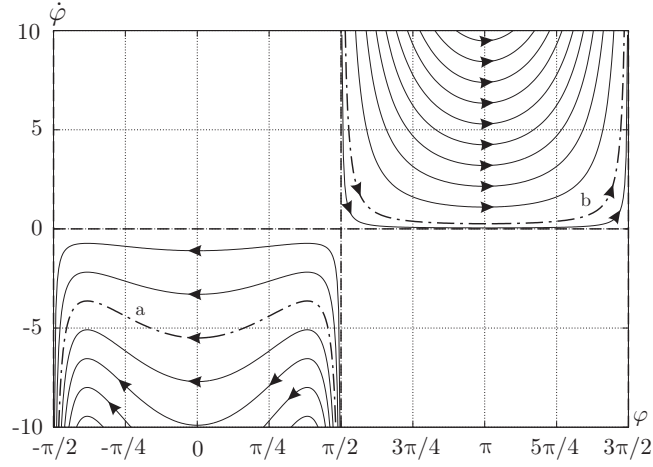}
		\captionof{figure}{Phase trajectories of the system \eqref{phi_phase}. The trajectories shown in the figure correspond to $\beta~=~0.5$ and to various motion patterns.}\label{phase_traj}
	\end{center}
	
	Depending on the initial conditions, two motion patterns are possible for the same value of $\beta$. The corresponding functions $\varphi(t)$ and $\Omega(t)$ are shown in Figs. \ref{PhiOmega}.
	
	\begin{figure}[h!]
		\begin{center}
			\includegraphics[width=0.48\linewidth]{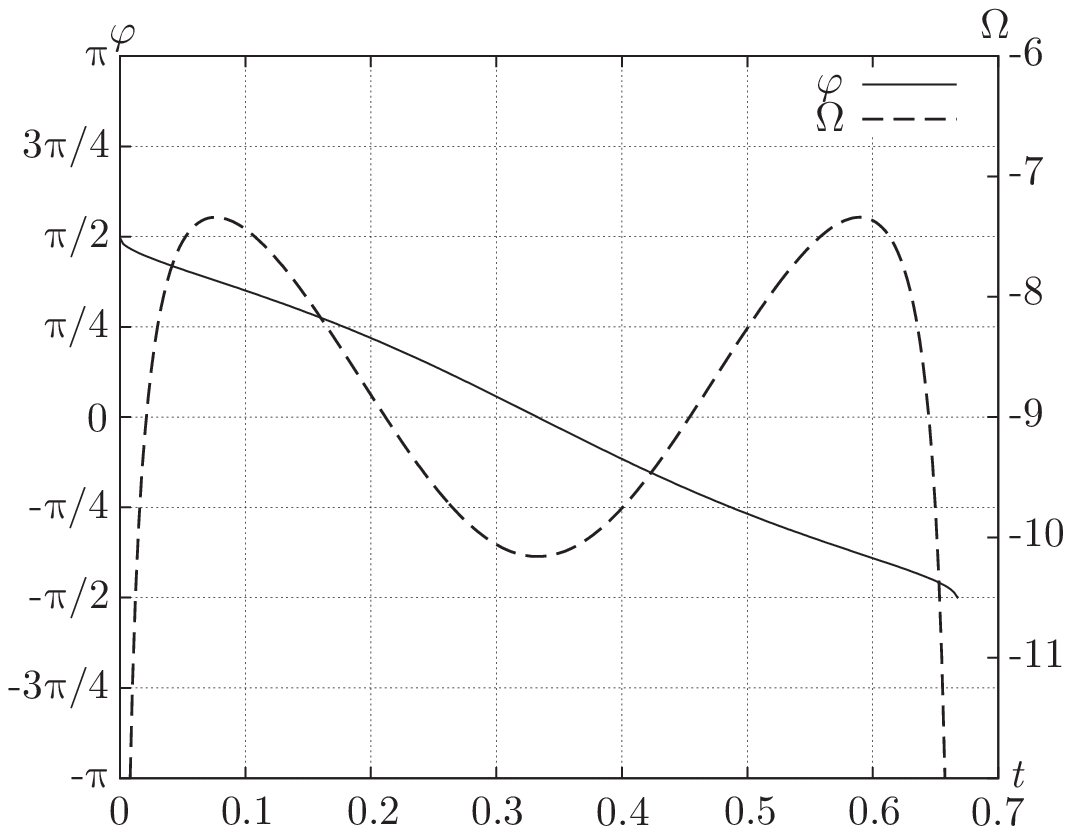}
			\includegraphics[width=0.48\linewidth]{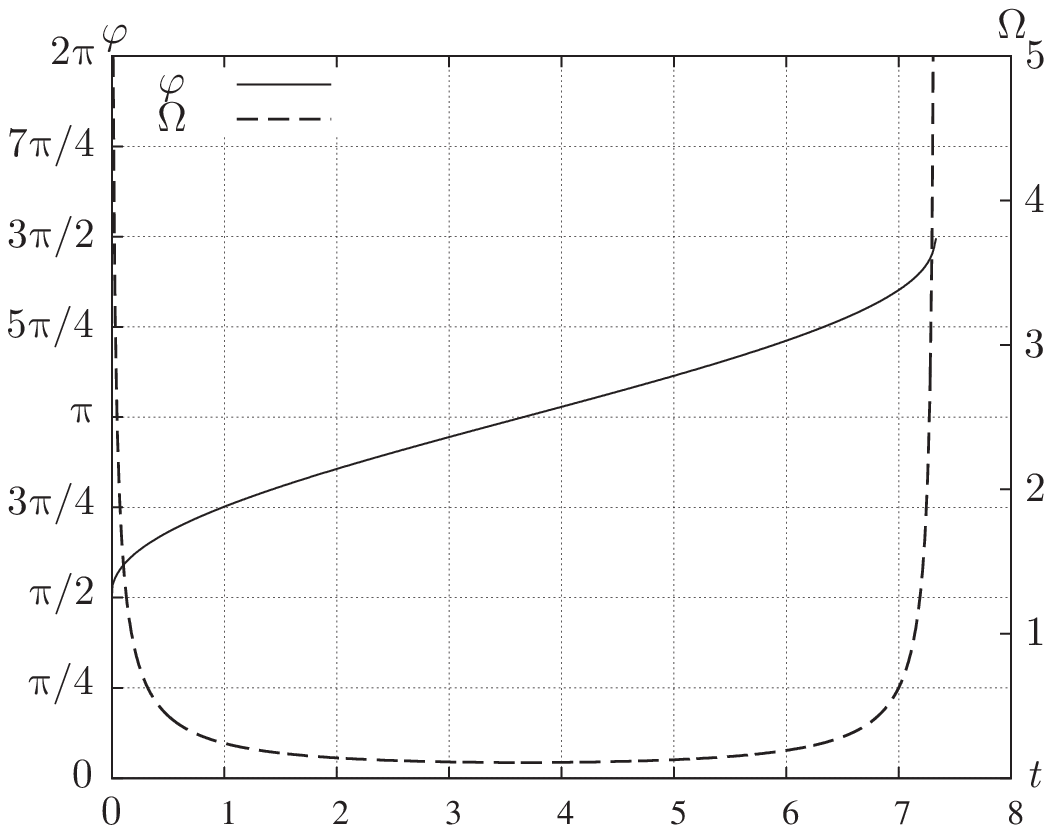}\\
			\begin{tabular}{p{0.48\linewidth}p{0.48\linewidth}}
				\parbox[c]{\linewidth}{\centering a)} & \parbox[c]{\linewidth}{\centering b)}
			\end{tabular}
			\caption{Functions $\varphi(t)$ and $\Omega(t)$ corresponding to different branches of the solution \eqref{Phi_sol} for $\beta~=~0.5$. The graphs correspond to the phase trajectories a) and b) in the previous figure.}\label{PhiOmega}
		\end{center}
	\end{figure}
	
	It can be seen from Figs. \ref{PhiOmega} that the function $\varphi$ reaches the critical values $-\dfrac{\pi}{2}$ and $\dfrac{3\pi}{2}$ in finite time. Using \eqref{omega_pp}, it is easy to check that $\Omega$ increases infinitely as $\varphi \rightarrow \pm \dfrac{\pi}{2}$. Hence, a partial stabilization is possible only in finite time.
	
	\begin{Note}
		Equation \eqref{eq_Phi1} has the following solution:
		\begin{gather}
		\begin{split}
		\frac{\sigma^2 - r^2}{r} \left( F \left(\varphi, \frac{\sigma}{r} \right) - F\left(\varphi _0, \frac{\sigma}{r} \right) \right) + {} & {} r \left( E \left(\varphi, \frac{\sigma}{r} \right) - E \left(\varphi _0, \frac{\sigma}{r} \right) \right) - \\
		{} - {} & {} \sigma (\sin \varphi - \sin \varphi_0)= \frac{r^2 - \sigma^2}{m \ol{\rho}} t,
		\end{split}\label{Phi_sol1}
		\end{gather}
		where $F \left( \varphi,\, \dfrac{\sigma}{r} \right)$ is the normal elliptic Legendre integral of the first kind. The above solution \eqref{Phi_sol1} includes two motion patterns corresponding to different initial conditions  $\varphi \in \left(-\dfrac{\pi}{2},\, \dfrac{\pi}{2} \right)$ and $\varphi \in \left(\dfrac{\pi}{2},\, \dfrac{3\pi}{2} \right)$.
	\end{Note}
	
	A straightforward calculation shows that the solution corresponding to the branch $\psi \in \left[\pi -\arcsin \dfrac{\sigma}{r},\, \pi + \arcsin \dfrac{\sigma}{r} \right]$ behaves similarly. In this case, the equation for the determination of $\psi$ and its solution have the form
	\begin{gather}
	\left( \frac{\sigma^2 \cos ^2 \varphi}{\sqrt{r^2 - \sigma^2 \sin^2 \varphi}} + \sigma \cos \varphi \right) \dot{\varphi} = \frac{\sigma^2- r^2}{m \ol{\rho}},\\
	\begin{split}
	\frac{\sigma^2 - r^2}{r} \left( F \left(\varphi, \frac{\sigma}{r} \right) - F\left(\varphi _0, \frac{\sigma}{r} \right) \right) + {} & {} r \left( E \left(\varphi, \frac{\sigma}{r} \right) - E \left(\varphi _0, \frac{\sigma}{r} \right) \right) + \\ {} + {} & {} \sigma (\sin \varphi - \sin \varphi_0)= \frac{\sigma^2 - r^2}{m \ol{\rho}} t.
	\end{split}
	\end{gather}
\end{proof}

\section{Appendix B. Proof of Proposition 4.3}

\begin{proof} First of all, we examine the general properties of the system of equations \eqref{phi_eq1}--\eqref{phi_eq3}. It is easy to verify that equations \eqref{phi_eq1} possess the symmetry
	\begin{gather}
	\rho \rightarrow \rho,\quad \psi \rightarrow - \psi,\quad \varphi _0 \rightarrow - \varphi _0,\quad \Omega \rightarrow \Omega,\quad t \rightarrow -t \label{symmetry}
	\end{gather}
	and the integral of motion
	\begin{gather}
	G = \rho e ^ {- \mathcal{F} (\psi)} = \const, \quad
	\mathcal{F} (\psi) = \int \frac{r \sin \psi + \sigma \sin \varphi _0}{r \cos \psi + \sigma \cos \varphi _0} d\psi. \label{rho_psi_int}
	\end{gather}
	The integral \eqref{rho_psi_int} and hence the behavior of the system depend on three parameters $r$, $\sigma$, and $\varphi _0$. The parameter $\varphi _0$ is related to the direction of motion of the internal mass, to circulation and the body geometry.
	
	Below we consider separately several cases depending on the values of these parameters. 	
	
	
	1. The condition $\sin \varphi _0 = 0$ corresponds to two values: $\varphi _0 = 0$ and $\varphi _0 = \pi$, and the integral \eqref{rho_psi_int} takes the form
	\begin{gather}
	G = \rho (r \cos \psi \pm \sigma). \label{rho11}
	\end{gather}
	Here the sign $+$ corresponds to $\varphi _0 = 0$, and the sign $-$ corresponds to $\varphi _0 = \pi$. In view of \eqref{rho11} the first equation of \eqref{phi_eq1} takes the form
	\begin{gather}
	\dot{\psi} = \frac{1}{mG} (r \cos \psi \pm \sigma )^2. \label{Psi11}
	\end{gather}
	Its solution depends on the relationship between the parameters $r$ and $\sigma$.
	
	1.1. For $r < \sigma$ (the center of the body is inside the circle $r=\sigma$), according to \eqref{rho11}, the function $\rho (t)$ has no singularities, is periodic and continuous for any value of $\psi$ and hence bounded on a given level set of the integral $G$. The right-hand side of \eqref{Psi11} preserves the sign and never vanishes, hence, the function $\psi (t)$ is monotonous and equation \eqref{Psi11} has no fixed points. An example of the functions $\rho (t)$, $\psi (t)$ and $\Omega (t)$ for $\varphi_0 = 0$ is shown in Fig. \ref{internal_fb1}. Thus, for $\sin \varphi _0 = 0$ and $r < \sigma$ a partial stabilization is possible during an arbitrarily long interval of time.
	
	\begin{figure}[h!]
		\begin{center}
			\includegraphics[width=0.48\linewidth]{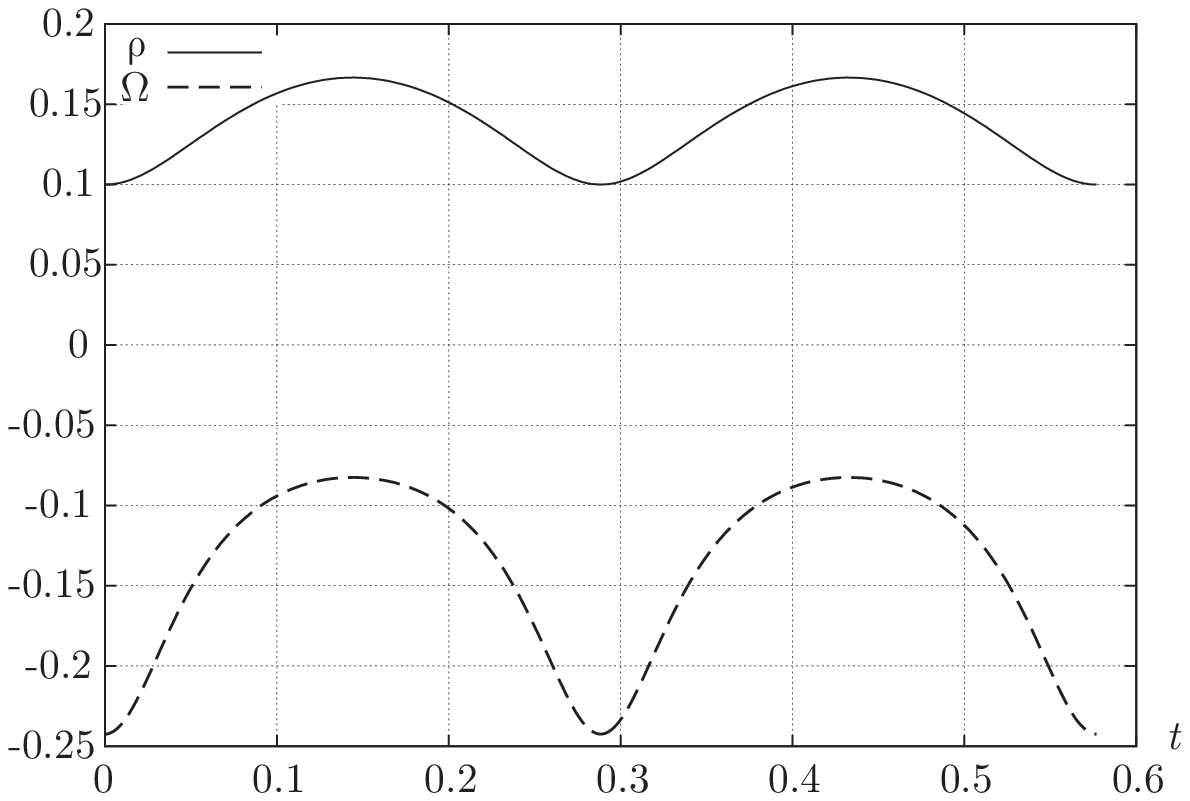}
			\includegraphics[width=0.48\linewidth]{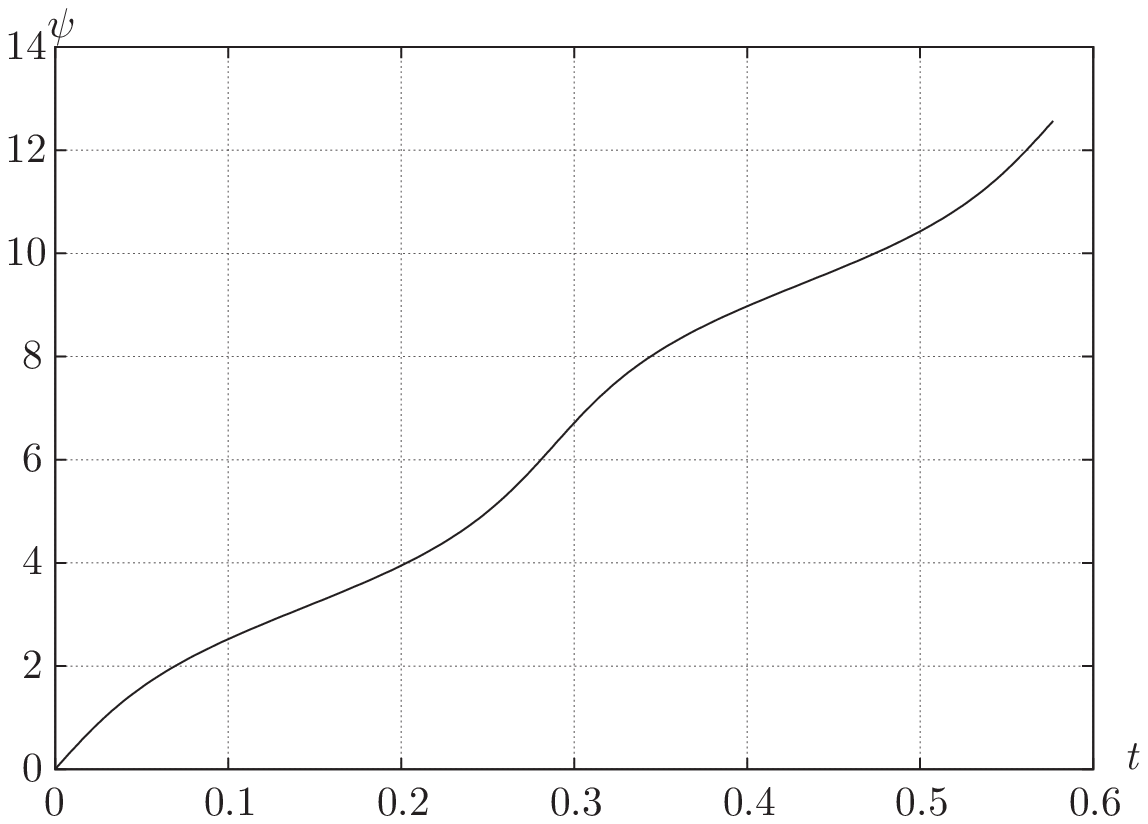}
			\caption{Functions $\rho (t)$, $\psi (t)$ and $\Omega (t)$}\label{internal_fb1}
		\end{center}
		\vspace{-8mm}
	\end{figure}
	
	1.2. For $r = \sigma$ (the center of the body is on the circle $r=\sigma$) the system of equations \eqref{phi_eq1} has a family of fixed points lying on the straight line $\psi = \psi_* = \pi + \varphi _0$. The phase trajectories of the system for $\varphi _0 = \pi$ and various values of the integral $G$ are shown in Fig. \ref{Phase_port10}.
	
	\begin{figure}[h!]
		\begin{center}
			\includegraphics[width=0.5\linewidth]{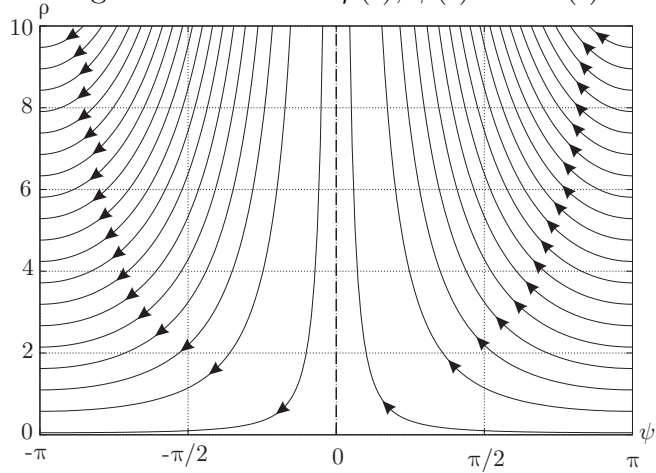}
			\caption{Phase trajectories of the system \eqref{phi_eq1} for $\varphi _0 = \pi$ and $\dfrac{r}{\sigma} = 1$}\label{Phase_port10}
		\end{center}
		\vspace{-5mm}
	\end{figure}
	\noindent It can be seen from Fig. \ref{Phase_port10} that the phase variable $\rho$ increases infinitely in a neighborhood of the straight line $\psi = \psi _*$.
	
	Let us examine the attainability of a fixed point. To do so, we
	linearize equation \eqref{Psi11} in its neighborhood
	\begin{gather}
	\dot{\psi} = \frac{r^2 \sin ^2  \psi _*}{m G} (\psi - \psi _*)^2.\label{ser_eq}
	\end{gather}
	
	Let us integrate equation \eqref{ser_eq} on the interval $[\psi _* - \varepsilon,\, \psi _*)$
	\begin{gather}
	t \frac{r^2 \sin^2\psi_*}{mG} = \frac{1}{\varepsilon} - \lim _{\psi \rightarrow \psi_*}\frac{1}{\psi - \psi_*} = \infty.
	\end{gather}
	Consequently, the phase trajectories approach the straight line $\psi = \psi _*$ in infinite time. Thus, for $\sin \varphi _0 = 0$ and $r = \sigma$ a partial stabilization can be performed only in finite time.
	
	1.3. For $r > \sigma$ (the center of the body is outside the circle $r=\sigma$), equation \eqref{Psi11} admits particular steady-state solutions
	\begin{gather}
	\psi ^*_\pm = \begin{cases}
	\pm \left( \pi - \arccos \frac{\sigma}{r} \right), & \varphi_0 = 0,\\
	\pm \arccos \frac{\sigma}{r}, & \varphi _0 = \pi.
	\end{cases}\label{asymp}
	\end{gather}
	The phase trajectories of the system on the plane $(\rho,\, \psi)$ for various values of the integral $G$ and the parameter values $r=1$, $\sigma = 0.5$ are shown in Fig.\ref{Phase_port11}.
	
	\begin{figure}[h!]
		\begin{center}
			\includegraphics[width=0.5\linewidth]{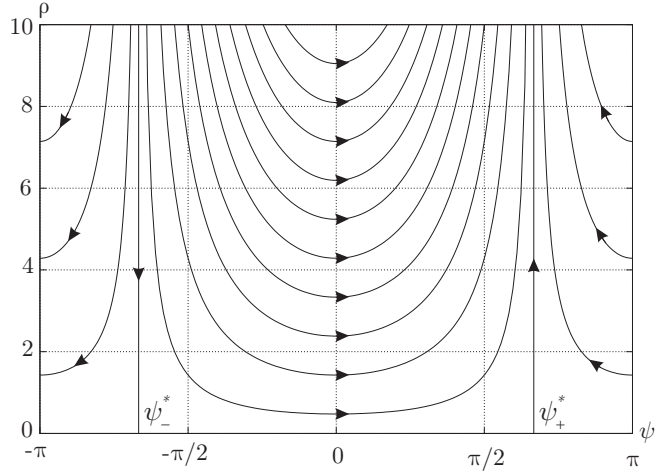}
			\caption{Phase trajectories of the system \eqref{phi_eq1} for $\sin \varphi _0 = 0$ and $\dfrac{r}{\sigma} > 1$}\label{Phase_port11}
		\end{center}
		\vspace{-5mm}
	\end{figure}
	
	It can be seen from Fig.\ref{Phase_port11} that the phase trajectories approach the vertical asymptotes $\psi = \psi ^*_\pm$, hence, as time goes on, $\rho \rightarrow +\infty$. Performing the same analysis as in the previous case, we can show that the value $\psi = \psi _{+}$ is reached in infinite time. Thus, for $\sin \varphi _0 = 0$ and $r > \sigma$ a partial stabilization can be performed only in finite time.
	
	2. Consider a more general case for which the line of motion of the internal mass is such that $\sin \varphi _0 \neq 0$. In this case, the integral \eqref{rho_psi_int} can be written as
	\begin{gather}
	G = \rho (r \cos \psi + \ol{\sigma} ) \exp \left( - \sigma \sin \varphi _0 \int \frac{d\psi}{r \cos \psi + \ol{\sigma}} \right)=\const,\quad \ol{\sigma} = \sigma \cos \varphi _0.\label{arb_Phi0}
	\end{gather}
	The exact form of the integral \eqref{arb_Phi0} depends on the relationship between $r$ and $\sigma \vert \cos \varphi _0 \vert$. The equality $r = \sigma \vert \cos \varphi _0 \vert$ defines the circle with the center at the point $\left(\dfrac{\zeta}{\lambda},\, -\dfrac{\chi}{\lambda} \right)$ and radius $\dfrac{\sqrt{\chi^2 + \zeta^2}}{\lambda} \vert \cos \varphi _0 \vert$.
	
	2.1. For $r < \sigma \vert \cos \varphi _0 \vert$ (the center of the body is inside the circle $r = \sigma \vert \cos \varphi _0 \vert$) the integral \eqref{arb_Phi0} is not unique and can be represented as
	\begin{multline}
		\ol{G} = \frac{G}{r} = \rho (\cos \psi + \varkappa ) \times \\
		\times \exp \left( - \frac{2 \varkappa \tan \varphi _0}{\sqrt{\varkappa^2 - 1}}  \left( \arctan \left( \frac{\sqrt{\varkappa^2 - 1}}{\varkappa + 1} \tan \frac{\psi}{2}\right) + \left[ \frac{\psi + \pi}{2\pi}\right] \pi \right) \right)=\const,\label{arb_Phi0_121}
	\end{multline}
	where $\varkappa = \dfrac{\ol{\sigma}}{r}$, $\vert\varkappa\vert  > 1$, $\psi \in [-\pi,\, \pi)$.
	
	The trajectories of the system \eqref{phi_eq1} fill everywhere densely the plane $(\rho,\, \psi)$. Depending on the relationship between the parameters $r$, $\sigma$, and $\varphi _0$, two types of phase portraits are possible (see Fig. \ref{Phase_port12}).
	
	\begin{figure}[h!]
		\begin{center}
			\includegraphics[width=0.48\linewidth]{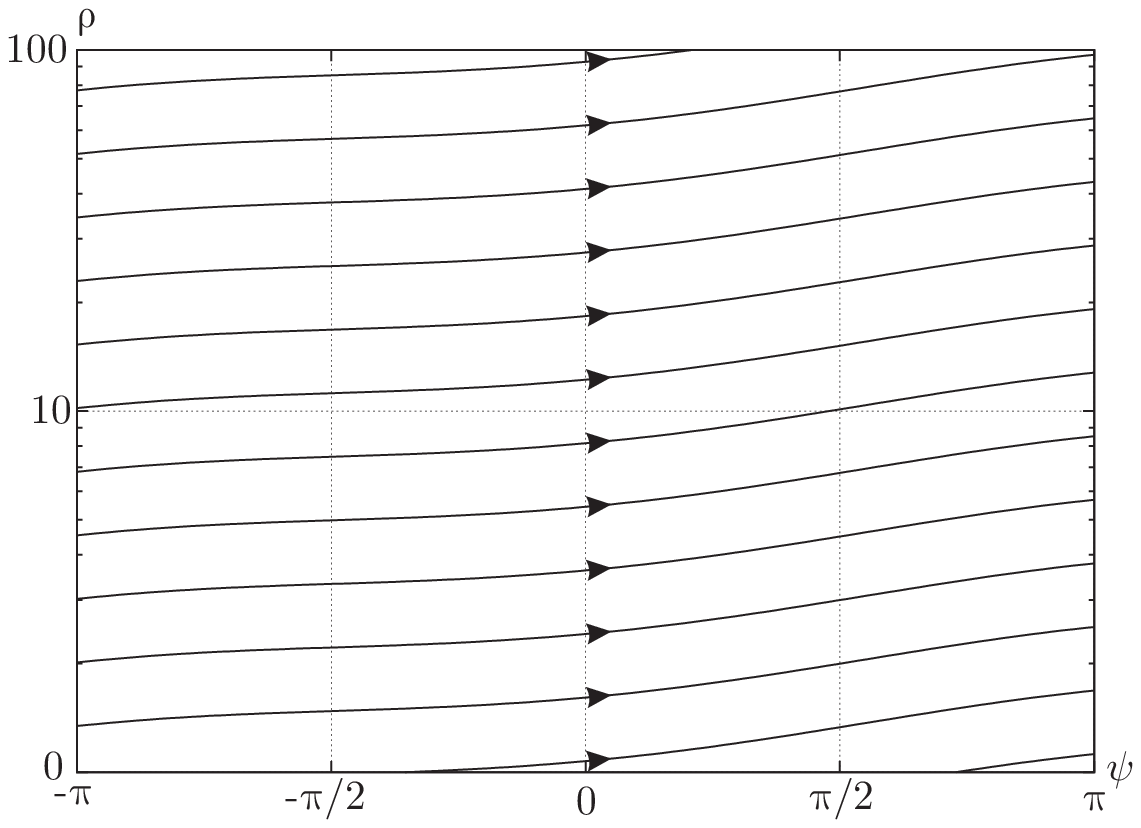}
			\includegraphics[width=0.48\linewidth]{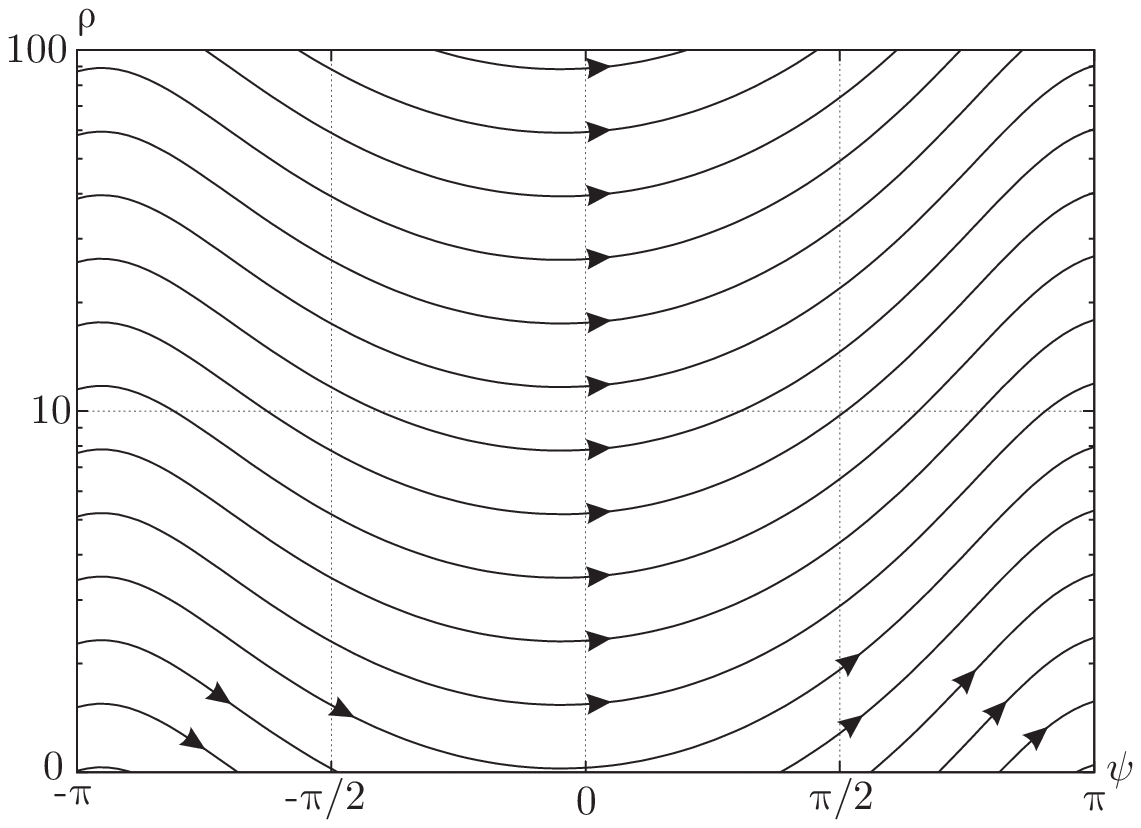}\\
			a) \hspace{8cm} b)
			\caption{Phase portraits of the system for $\sigma = 1.5$, $\varphi _0 = 0.1$. a) $r = 0.1$, b) $r = 1$} \label{Phase_port12}
		\end{center}
		\vspace{-5mm}
	\end{figure}
	
	Indeed, the right-hand side of the second equation of \eqref{phi_eq1} is nonnegative (nonpositive) for $\dfrac{\sigma \vert \sin \varphi _0 \vert}{r} \geqslant 1$. This means nondecrease (nonincrease) of the function $\rho$ (see Fig.~\ref{Phase_port12}a). Otherwise the sign $\dot{\rho}$ changes twice in one period of the variable $\psi$, and the system trajectories have extrema (see Fig.~\ref{Phase_port12}b).
	
	According to the first equation of \eqref{phi_eq1}, the function $\psi(t)$ is monotonous, since the right-hand side of the equation is sign-definite by virtue of the condition $r < \sigma \vert \cos \psi _0 \vert$. Using the integral \eqref{arb_Phi0_121}, we estimate the change of $\rho$ for one period of the variable $\psi \in [-\pi,\, \pi)$
	\begin{gather}
	\Delta \rho = \rho _0 \left( \exp \left( \frac{2 \pi \varkappa \tan \varphi _0}{\sqrt{\varkappa^2 - 1}} \right) - 1 \right). \label{delta_rho}
	\end{gather}
	It can be seen from \eqref{delta_rho} that the increment $\Delta \rho$ of the phase variable $\rho$ is directly proportional to the value $\rho _0 = \rho \big\vert _{\psi=-\pi}$. Moreover, $\sign \Delta \rho = \sign \tan \varphi _0$. It is easy to show that for $N$ periods the increment is
	\begin{gather}
	\Delta \rho _N = \rho _0 \left( \exp \left( \frac{2 N \pi \varkappa \tan \varphi _0}{\sqrt{\varkappa^2 - 1}} \right) - 1 \right).
	\end{gather}
	That is, the increment depends exponentially on the number of periods $N$. Thus, despite the existence of two types of phase portraits, the phase variable $\rho$ increases on an average if $\tan \varphi _0 > 0$ and decreases on an average if $\tan \varphi _0 < 0$.
	
	According to \eqref{phi_eq3}, as $\rho$ decreases infinitely, $\Omega \rightarrow \infty$. Thus, if the condition $r < \sigma \vert \cos \varphi _0 \vert$ is satisfied, either $\rho$ or $\Omega$ increases indefinitely, depending on the value of $\varphi _0$. Thus, for $\sin \varphi _0 \neq 0$ and $r = \sigma \vert \cos \varphi _0 \vert$ a partial stabilization can be performed only in finite time.
	
	2.2. For $r > \sigma \vert \cos \varphi _0 \vert$ (the center of the body is outside the circle $r = \sigma \vert \cos \varphi _0 \vert$) the integral \eqref{arb_Phi0} can be written as
	
	\begin{gather}
	\ol{G} = - \rho \frac{\sign (\tau_{+}(\psi) \tau_{-}(\psi) )}{1 + \tan ^2 \frac{\psi}{2}} \vert \tau _{-} (\psi) \vert ^{\delta + 1} \vert \tau _{+} (\psi) \vert ^{1-\delta},\label{int_rhoPsi}\\
	\delta = \frac{\varkappa \tan \varphi_0}{\sqrt{1 - \varkappa^2}},\quad
	\tau _{\pm}(\psi) = \sqrt{ 1 - \varkappa } \tan \frac{\psi}{2} \pm \sqrt{ 1 + \varkappa }\nonumber.
	\end{gather}
	
	Consider the values $\psi _\pm = \mp 2 \arctan \sqrt{\frac{1 + \varkappa}{1 - \varkappa}}$, which are zeros of the functions $\tau _\pm(\psi)$. It is seen from~\eqref{int_rhoPsi} that the behavior of the system \eqref{phi_eq1} in a neighborhood of the lines $\psi = \psi _\pm$ can change depending on the parameter $\delta$. For the values $\delta < 0$ three possible types of phase portraits are shown in Fig. \ref{Phase_port13}.
	
	\begin{figure}[h!]
		\begin{center}
			\includegraphics[width=0.32\linewidth]{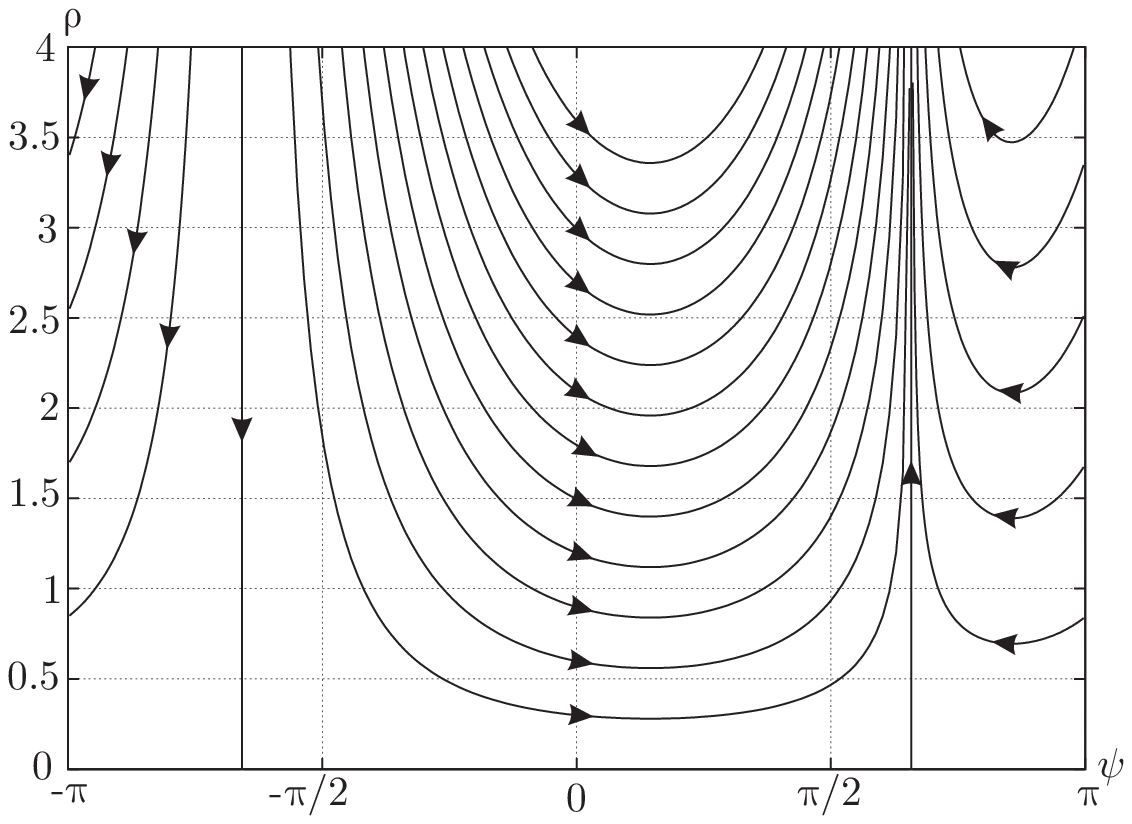}
			\includegraphics[width=0.32\linewidth]{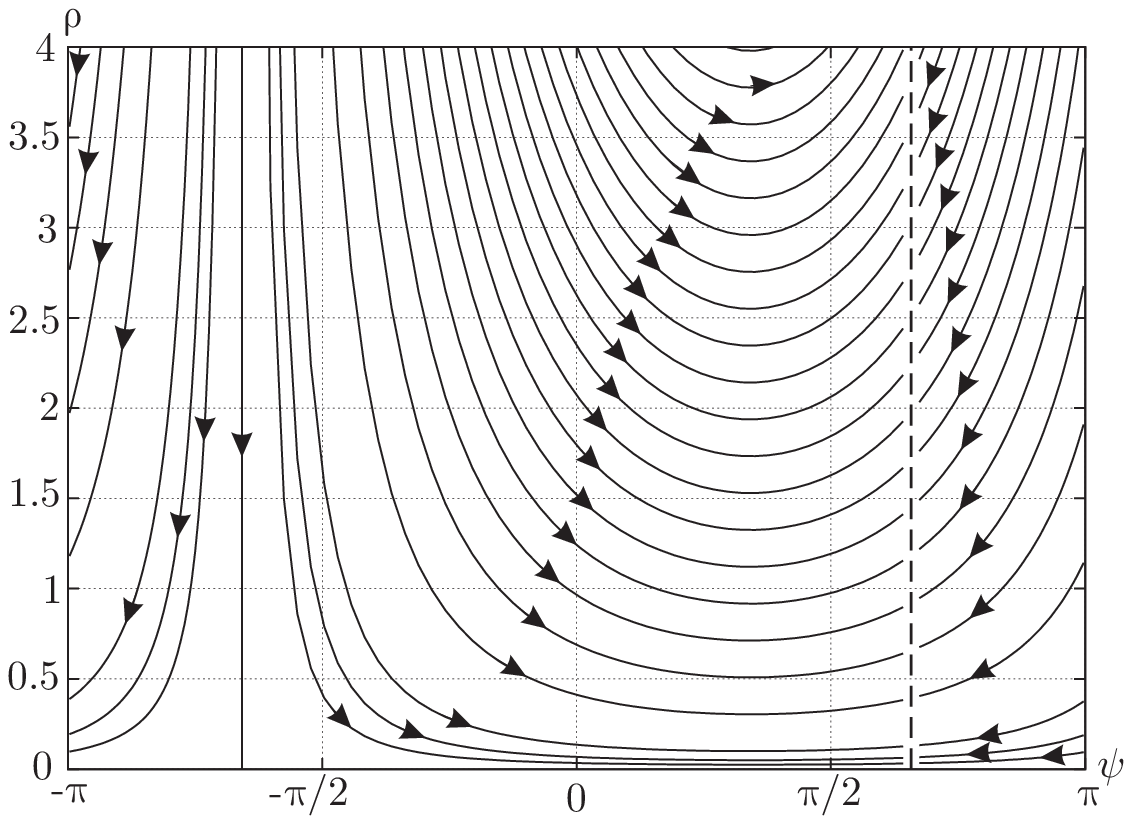}
			\includegraphics[width=0.32\linewidth]{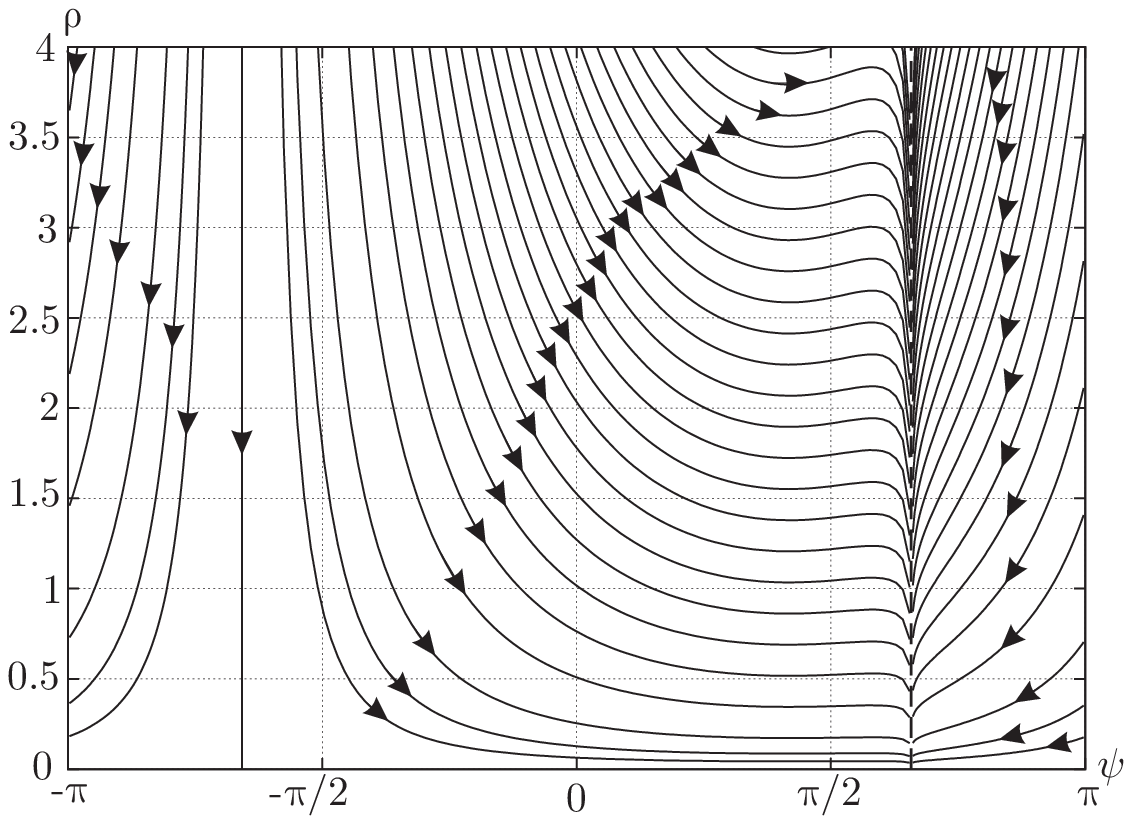}\\
			a) \hspace{4.5cm} b) \hspace{4.5cm} c)
			\caption{a) $\delta = -0.5$, b) $\delta = -1$, c) $\delta = -1.1$}\label{Phase_port13}
		\end{center}
		\vspace{-2mm}
	\end{figure}

	\begin{Note}
		By virtue of the symmetry \eqref{symmetry}, the phase portrait for some $\delta = -\delta _0 < 0$ can be obtained by a mirror reflection of the phase portrait for $\delta = \delta _0$ relative to $\psi = 0$ and by changing the direction of motion along the trajectories.
	\end{Note}
	
	2.2.1. If $\vert \delta \vert < 1$, the phase variable $\rho$ infinitely increases near $\psi _\pm$. The asymptotes $\psi = \psi _\pm$ are always separated from each other regardless of the relationship between the parameters $r$, $\sigma$, and $\varphi _0$. Hence, the qualitative behavior of the system trajectories on the plane $(\rho,\, \psi)$  (see Fig. \ref{Phase_port13}a) is also independent of the relationships between these parameters and is the same as the behavior considered in the case $\sin \varphi _0 = 0$, $r > \sigma$ (see Fig. \ref{Phase_port11}). Thus, for $\sin \varphi _0 \neq 0$ and $r < \sigma \vert \cos \varphi _0 \vert$ a partial stabilization can be performed only in finite time.
	
	2.2.2. The condition $\vert \delta \vert = 1$ is equivalent to the equality $r = \sigma$. In this case, the right-hand sides of equations \eqref{phi_eq1} vanish simultaneously for $\psi = \pi + \varphi _0$. When $\delta = -1$, the asymptote $\psi = \psi _{-}$ disappears, and its place is taken by a family of fixed points; there are no qualitative changes in a neighborhood of the asymptote $\psi = \psi _{+}$ (see Fig. \ref{Phase_port13}b). Similarly, when $\delta = 1$, the lines $\psi = \psi _{+}$ correspond to a family of fixed points and $\psi = \psi _{-}$ is an asymptote. To perform a stability analysis of these fixed points, we represent the first equation of \eqref{phi_eq1} as
	\begin{gather}
	m \rho \dot{\psi} = -\frac{r}{1 + \tan ^2 \frac{\psi}{2}} \tau _{+} (\psi)\tau _{-} (\psi) \label{psi_tau}.
	\end{gather}

	Let us analyze the stability of the family of fixed points $\psi = \psi _{-}$ for $\delta=-1$. For this purpose, we linearize equation \eqref{psi_tau} in a neighborhood of $\psi = \psi _{-}$
	\begin{gather}
	m \rho \dot{\Delta \psi} = - \frac{r}{1 + \tan^2 \frac{\psi _{-}}{2}} \tau _{+}(\psi _{-}) \frac{\sqrt{1 - \varkappa}}{2 \cos^2 \frac{\psi _{-}}{2}} \Delta  \psi, \quad \psi = \psi _{-} + \Delta  \psi .
	\end{gather}
	Since the coefficient of $\Delta \psi$ is negative, the fixed points of the family $\psi = \psi_{-}$ are stable. In a similar way, it can be shown that the fixed points of the family $\psi = \psi _{+}$ are unstable for $\delta = 1$. Since $\sign \delta = \sign \tan \varphi _0$, the system has the above family of stable fixed points for $\tan \varphi _0 <0$, and the family of unstable fixed points for $\tan \varphi _0 > 0$. Thus, a partial stabilization is possible in infinite time when $\sin \varphi _0 \neq 0$, $r > \sigma \vert \cos \varphi _0 \vert$ and $\delta = -1$.
	
	2.2.3. Consider the behavior of the system for $\vert \delta \vert > 1$. The phase portrait corresponding to $\delta < - 1$ is shown in Fig. \ref{Phase_port13}c. The behavior in a neighborhood of the straight line $\psi = \psi _{+}$ does not change qualitatively. In contrast to the cases considered above, the line $\psi = \psi_{-}$ becomes a discontinuity of the integral~$\ol{G}$. Note that due to equation \eqref{psi_tau} $\dot{\psi} > 0$ for $\psi \in (\psi_{+},\, \psi _{-})$ and $\dot{\psi} < 0$ for $\psi \in [-\pi / 2,\, \psi_{+}) \cup (\psi _{-},\, \pi / 2]$. Thus, all trajectories of \eqref{phi_eq1} tend to the point $\psi = \psi _{-}$, $\rho = 0$ on a given level set of the integral $\ol{G}$.
	
	The point $\psi = \psi _{-}$, $\rho = 0$ is the singular point of \eqref{phi_eq1}. This singularity may be due to either the choice of polar coordinates or the existence of an essential singular point in the system. In order to define the type of singularity, it is necessary to examine the value of the limit $\lim \limits _{\rho \rightarrow 0} \dot{\rho}$ depending on $\psi$. This analysis for the system considered shows that the point $\psi = \psi _{-}$, $\rho = 0$ is an essential singular point and all trajectories of the system converge to it.
	
	Consider the attainability of the point $\psi = \psi _{-}$, $\rho = 0$ in finite/infinite time. Let us eliminate $\rho$ from the first equation of \eqref{phi_eq1} using the integral \eqref{int_rhoPsi}
	\begin{gather}
	\dot{\psi} = \frac{1}{m G} \frac{1}{(1+\tan^2 \frac{\psi}{2})^2} \vert \tau_{+} (\psi) \vert ^{2-\delta} \vert \tau_{-} (\psi) \vert ^{2+\delta} \label{Psi13}.
	\end{gather}
	Equation \eqref{Psi13} can be approximated by
	\begin{gather}
	\dot{\psi} = \Lambda (\psi - \psi_{-})^{2 + \delta} \label{approx_Psi}
	\end{gather}
	using the Taylor series expansion of the function $\tau_{-}(\psi)$ in a neighborhood of $\psi = \psi _{-}$. Without loss of generality we set $\psi - \psi_{-} > 0$. Since $2 + \delta < 1$, the solution of \eqref{approx_Psi} is the following power function:
	\begin{gather}
	\psi = \psi_{-} + ( -(1 + \delta) (\Lambda t  + C))^{-1 / (\delta + 1)},
	\end{gather}
	whence it is clear that the line $\psi = \psi _{-}$ is attained in finite time.
	
	Let us consider the behavior of the angular velocity $\Omega$ as the line $\psi = \psi _{-}$ is approached. Expression \eqref{phi_eq3} can be written as
	\begin{gather}
	\Omega = \frac{1}{I_r} \left( C_{xy} - (m \rho^2 + \ol{b}) \dot{\psi} \right). \label{last_eq}
	\end{gather}
	It is seen from \eqref{approx_Psi} and \eqref{last_eq} that for $\delta \in (-2,\, -1)$ the derivative $\dot{\psi}$ tends to zero as $\psi = \psi _{-}$ is approached, hence, $\Omega \rightarrow \dfrac{C_{xy}}{I_r}$. If $\delta = -2$, then $\dot{\psi} = \Lambda$, hence, $\Omega \rightarrow \dfrac{C_{xy} - \ol{b} \Lambda}{I_r}$. If $\delta < - 2$, then $\dot{\psi} \rightarrow \infty$, hence, $\Omega \rightarrow \infty$. Thus, a partial stabilization is possible in finite time for $\sin \varphi _0 \neq 0$, $r > \sigma \vert \cos \varphi _0 \vert$ and $\vert \delta \vert > 1$.
\end{proof}

\end{document}